\newtheorem{theorem}{Theorem}
\newtheorem{lemma}{Lemma}
\newtheorem{corollary}{Corollary}
\theoremstyle{remark}
\newtheorem{remark}{Remark}
\newcommand{\Sym}{\text{Sym}^2}
\begin{document}

\markboth{Ritabrata Munshi}{Bounds for twisted symmetric square $L$-functions}
\title[Bounds for twisted symmetric square $L$-functions]{Bounds for twisted symmetric square $L$-functions}

\author{Ritabrata Munshi}   
\address{School of Mathematics, Tata Institute of Fundamental Research, 1 Homi Bhabha Road, Colaba, Mumbai 400005, India.}     
\email{rmunshi@math.tifr.res.in}

\begin{abstract}
Let $f\in S_k(N,\psi)$ be a newform, and let $\chi$ be a primitive character of conductor $q^{\ell}$. Assume that $q$ is a prime and $\ell>1$. In this paper we describe a method to establish convexity breaking bounds of the form
$$
L\left(\tfrac{1}{2},\Sym f\otimes\chi\right)\ll_{f,\varepsilon} q^{\frac{3}{4}\ell-\delta_{\ell}+\varepsilon}
$$
for some $\delta_{\ell}>0$ and any $\varepsilon>0$. In particular, for $\ell=3$ we show that the bound holds with $\delta_{\ell}=\frac{1}{4}$. 
\end{abstract}

\subjclass{11F66, 11M41}
\keywords{Symmetric square $L$-functions, subconvexity, twists}

\maketitle


\section{Introduction}
\label{intro}

Let $f\in S_{k_f}(N_f,\psi_f)$ be a newform of weight $k_f$, level $N_f$ and nebentypus $\psi_f$. Let $\chi$ be a primitive character of conductor $M_{\chi}$, with $(M_{\chi},N_f)=1$. The `conductor' of the degree three $L$-function $L(s,\Sym f \otimes \chi)$ is given by $N_fM_{\chi}^3$. Accordingly, from the functional equation and the convexity principle we get the bound
$$
L\left(\tfrac{1}{2},\Sym f\otimes\chi\right)\ll_{f,\varepsilon} M_{\chi}^{\frac{3}{4}+\varepsilon}
$$
for the central value. Getting a subconvex bound, i.e. getting a bound of the form $M_{\chi}^{\theta}$ with $\theta<3/4$, in this context is an intriguing problem in the analytic theory $L$-functions. The Generalized Lindel\"of Hypothesis, which is a consequence of the Generalized Riemann Hypothesis, predicts that the exponent can be taken to be any positive real number however small.\\

In this paper we will describe a method to prove a subconvex bound for the twisted $L$-function in the case where the conductor $M_{\chi}=q^{\ell}$, with $q$ a prime number and $\ell>1$. 
\begin{theorem}
\label{mthm}
Let $f\in S_{k_f}(N_f,\psi_f)$ be a newform of weight $k_f$, level $N_f$ and nebentypus $\psi_f$. Let $\chi$ be a character of conductor $M_{\chi}=q^{\ell}$ where $q$ is a prime number and $\ell>1$. Then we have
$$
L\left(\tfrac{1}{2},\text{$\rm{Sym}^2$} f\otimes\chi\right)\ll_{f,\ell,\varepsilon} q^{\frac{3}{4}\ell-\delta_{\ell}+\varepsilon}
$$
for some $\delta_{\ell}>0$, which depends only on $\ell$. The implied constant depends on $f$, $\ell$ and $\varepsilon$, but does not depend on $q$. 
\end{theorem}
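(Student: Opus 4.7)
The plan is to attack the central value through a smoothed approximate functional equation
$$
L\bigl(\tfrac12,\Sym f\otimes\chi\bigr)\ll_{f,\varepsilon} q^{\varepsilon}\sup_{N\ll q^{3\ell/2+\varepsilon}}\frac{|S(N)|}{\sqrt{N}},\qquad S(N)=\sum_{n\geq 1}\lambda(n)\chi(n)\,V(n/N),
$$
where $\lambda(n)$ is the $n$-th Dirichlet coefficient of $L(s,\Sym f)$ and $V$ is a fixed smooth bump function. Convexity amounts to the trivial square-root cancellation estimate $S(N)\ll N^{1/2}q^{3\ell/4+\varepsilon}$, so any genuine power saving in $S(N)$ beyond that translates directly into a positive $\delta_{\ell}$.

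The engine I would use is a $q$-adic conductor-lowering identity, available precisely because $\ell\geq 2$: for $\chi$ primitive modulo $q^{\ell}$ and $(n,q)=1$ one has
$$
\chi\bigl(n+uq^{\ell-1}\bigr)=\chi(n)\,e_q\bigl(a_\chi\bar n\,u\bigr),\qquad u\in\mathbb{Z}/q\mathbb{Z},
$$
with $a_\chi\in\mathbb{F}_q^{\times}$ a normalised first $q$-adic Taylor coefficient of $\log\chi$ and $\bar n$ the inverse of $n$ modulo $q$. Averaging $S(N)$ smoothly over the shifts $n\mapsto n+uq^{\ell-1}$ for $|u|\leq K\leq q$, compensated by the dual phase $e_q(-a_\chi\bar n u)$, rewrites $S(N)$ exactly but injects a free extra integration in $u$ of length $K$. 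I would then apply the $GL_3$ Voronoi summation formula of Miller--Schmid to the $n$-variable; this dualises $n$ into a sum over $m$ with hyper-Kloosterman weights whose modulus is controlled by $q^{\ell}$ and whose phase depends non-trivially on the free parameter $u$.

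The finale is Cauchy--Schwarz against the dual variable $m$, opening the absolute square and swapping the order of summation, followed by Poisson in the residue class of $n$ modulo $q$. This reduces the problem to a short bilinear sum whose inner kernel is a complete character sum modulo $q$ of the rough shape
$$
\mathfrak{C}(u_1,u_2;m_1,m_2)=\sum_{n\bmod q}e_q\bigl(a_\chi\bar n(u_1-u_2)+P(n;m_1,m_2)\bigr),
$$
where $P$ is an explicit rational phase produced by Voronoi. Off the diagonal $u_1=u_2$ I would expect square-root cancellation in $\mathfrak{C}$ from Weil--Deligne; combining this with the diagonal $u_1=u_2$ contribution and optimising $K$ against the two losses gives the saving $q^{\delta_{\ell}}$, and for $\ell=3$ a clean choice such as $K=q$ should produce the explicit value $\delta_3=1/4$. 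The principal obstacle is precisely the character-sum step: one has to check that the total phase inside $\mathfrak{C}$ is non-degenerate as a rational function of $n$ for almost all $(m_1,m_2)$, and then control the degenerate locus carefully enough that its trivial contribution does not wipe out the gain. A secondary technical nuisance is aligning the averaging length $K$ with the Voronoi dual length for general $\ell$, which is what forces $\delta_{\ell}$ to depend on $\ell$ in a non-trivial way.
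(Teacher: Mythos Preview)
Your proposal takes a genuinely different route from the paper. The paper does \emph{not} use $GL(3)$ Voronoi summation or the Postnikov-type conductor-lowering identity $\chi(n+uq^{\ell-1})=\chi(n)e_q(a_\chi\bar n u)$ at all. Instead it embeds $f$ as an oldform in a much larger space $S_k(M)$ with $M\asymp q^{3j+1}$ (for $\ell=2j+1$) and bounds the \emph{second moment}
\[
\sum_{g\in\mathcal{B}_k(M)}\omega_g^{-1}\,|L_g(N)|^2
\]
via the Petersson formula. The structural point is that the Kloosterman sums $S(n^2,m^2;q^{4}c)$ in the off-diagonal have a large $q$-part that can be evaluated explicitly as a Sali\'e-type sum; after reciprocity and two rounds of Poisson summation the problem collapses to short sums of Jacobi symbols, handled by Heath-Brown's large sieve for real characters. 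The whole argument stays within $GL(2)$ technology; the paper in fact emphasises this contrast with Blomer and Li.

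Your strategy---short $u$-average, $GL(3)$ Voronoi, Cauchy--Schwarz, Poisson, then Weil--Deligne on the resulting complete sum---is in the spirit of later depth-aspect work and is not unreasonable as an outline. But as you yourself flag, it is not a proof: the non-degeneracy of the rational phase in $\mathfrak{C}(u_1,u_2;m_1,m_2)$ is precisely where such arguments succeed or fail. After $GL(3)$ Voronoi the dual side carries hyper-Kloosterman weights $\mathrm{Kl}_3$, and their interaction with the Postnikov phase $e_q(a_\chi\bar n u)$ has to be analysed explicitly; establishing square-root cancellation off a thin degenerate locus in $(m_1,m_2)$, and then showing that locus contributes acceptably, is the entire content of such a proof and cannot be asserted. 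So as written you have a plausible sketch with the decisive step left open, whereas the paper's moment method, though heavier to set up, avoids $GL(3)$ Voronoi altogether and closes with a standard large-sieve inequality.
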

This result is the first instance of a subconvexity bound in the level aspect for a genuine degree three $L$-function which is not self-dual. This nicely complements the recent work of Blomer \cite{Bl2} who has proved
$$
L\left(\tfrac{1}{2},\Sym f\otimes\chi\right)\ll_{f,\varepsilon} M_{\chi}^{\frac{5}{8}+\varepsilon},
$$
in the case of quadratic characters $\chi$ and $f$ of full level. As in \cite{Bl2}, the situation considered here is quite special and it still remains a major open problem to prove subconvexity of a general degree three $L$-function in the level aspect.\\

The only other subconvexity result known in the case of the symmetric square $L$-function is in the $t$-aspect, which has been established by X. Li \cite{L}. The method used by Blomer, or Li, is very much different from our method in this paper. In particular both Blomer and Li employ $GL(3)$ Voronoi summation formula, while we only require $GL(2)$ techniques. Another crucial input in their work is a deep result of Lapid \cite{La} on the positivity of the central value. Since, in this paper, we estimate the second moment, we do not have to rely on positivity. \\

The assumption that $f$ is a holomorphic form in Theorem \ref{mthm} is made only for simplicity, and a similar subconvex bound can also be proved for Maass forms. In that case we have to use Kuznetsov formula instead of the Petersson formula in Section \ref{rec-poi1}. After that most of the calculations remain unchanged, except in Section \ref{integral} where we need to make some alterations. Also the assumption that $q$ is a prime is made only for technical reasons. In fact, the technique works for a large class of composite conductors, including square-free ones having more than one prime factors satisfying certain relative size restrictions (see \cite{Mu}). The prime-power conductors, $q^{\ell}$ with $\ell>1$, form a distinctive subset in this class, and for notational and technical simplicity we restrict ourselves to this subclass.  Finally, it should be clear that  the result also holds for other points on the critical line $\frac{1}{2}+it$, and in that case the constant also depends on $t$, but grows at most polynomially with $t$. \\  
 
The proof of the theorem is based on the method of moments. But the choice of the family is a little strange. It turns out that the natural family consisting of all the twists of conductor $q^{\ell}$ is not a good choice. This, in fact, leads to the difficult shifted convolution sum 
$$
\mathop{\sum\sum}_{\substack{n,m\sim q^{3\ell/2}\\n\equiv m \bmod q^{\ell}}}\lambda_f(n^2)\lambda_f(m^2).
$$
Neither the circle method nor the $GL(2)$ spectral theory seems to be effective to deal with this problem directly. \\

Now we will briefly outline our approach. First using approximate functional equation we get that the twisted value $L\left(\tfrac{1}{2},\text{$\rm{Sym}^2$} f\otimes\chi\right)$ is given by a rapidly converging series of effective length $q^{\frac{3\ell}{2}}$. Trivial estimation of this series, ignoring the oscillation in the sign of the Fourier coefficients and the character values, leads back to the convexity bound. So to break the convexity barrier one has to utilize the oscillation to produce cancellation in the sum. At this point one can cut the sum into dyadic segments and try to get bounds for each segments separately. It follows that
\begin{align}
\label{one}
L\left(\tfrac{1}{2},\text{$\rm{Sym}^2$} f\otimes\chi\right)\ll_{A,\varepsilon} q^{\varepsilon}\sum_{N}\frac{|L_f(N)|}{\sqrt{N}}\left(1+\frac{N}{q^{3\ell/2}}\right)^{-A}
\end{align}
for any $A>0$. Here $N$ ranges over $2^{\alpha}$ with $\alpha>-1/2$, and $L_f(N)$ is a linear form given by
\begin{align}
\label{lf}
L_f(N):=\sum_{n}\lambda_f(n^2)\chi(n)h(n/N),
\end{align}
where $h$ is a smooth weight function supported in $[1,2]$. For smaller segments the trivial bound $L_f(N)\ll N^{1+\varepsilon}$ is good enough. So the problem boils down to proving a nontrivial bound for the linear form in the range $N\asymp q^{\frac{3\ell}{2}}$, i.e. square-root of the conductor. \\

As hinted by our notation in \eqref{lf}, we will keep $\chi$ fixed, and seek for an appropriate family containing the given form $f$. In fact, we consider $f$ as a Hecke oldform in the larger space $S_k(M)$ of level $M$. Since we are not looking for optimal bounds, we will take 
$$
M=\begin{cases}16N_fq^{3j+1} & \text{for $\ell=2j+1$ odd},\\16N_fq^{3j-1} & \text{for $\ell=2j>2$ even},\\ 16N_fq^3 &\text{for $\ell=2$.} 
\end{cases}
$$
Then we compute the second moment of the associated linear form
$$
\sum_{g\in \mathcal B_k(M)}\omega_g^{-1}\left|\mathcal A_g\right|^2\left|L_g(N)\right|^2,
$$ 
where $\mathcal B_k(M)$ stands for an orthogonal basis of the space containing the given form $f$, and 
$$
\omega_g=\|g\|_M^2\frac{(4\pi)^{k-1}}{\Gamma(k-1)}
$$ 
denotes the spectral weight. The amplifier $\mathcal A_g$ is required to take care of the diagonal contribution only in the case $\ell=2$. In this case we further need the forms in the basis $\mathcal B_k(M)$ to be Hecke forms. \\

The value of $\delta_{\ell}$ can be computed explicitly, though we do not try to do this here for all $\ell$. In the next section we give some arguments to show that with the above choice for $M$, we have $\delta_{\ell}$ to be at least $\frac{1}{4}$ for all odd $\ell>1$. It follows, as one expects, that with some fine tuning (i.e. by choosing $M$ appropriately) we can have $\delta_{\ell}$ to increase linearly with $\ell$. \\

To maintain notational simplicity and to keep the arguments as transparent as possible, we will give complete details only for the case of full level and $\ell=3$. It should be clear that the arguments generalize to the case of general level and general nebentypus. The major part of this paper goes into proving the following result.
\begin{theorem}
\label{thm2}
Let $L_g$ be the linear form as defined in \eqref{lf}. For $N\leq q^{\frac{9}{2}+\varepsilon}$ we have
$$
\sum_{g\in \mathcal B_k(16q^4)}\omega_g^{-1}\left|L_g(N)\right|^2\ll_{k,\varepsilon} Nq^{\varepsilon}.
$$
\end{theorem}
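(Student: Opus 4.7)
The plan is to open $|L_g(N)|^2$ as a double sum over $(n,m)$, swap the order of summation, and apply the Petersson trace formula on $\mathcal{B}_k(16q^4)$ with indices $n^2$ and $m^2$. Since $n^2 = m^2$ with $n,m$ positive forces $n = m$, the delta term contributes
\[
\sum_{n} |\chi(n)|^2 h(n/N)^2 \ll N,
\]
which already matches the target. The entire problem therefore reduces to controlling the Kloosterman off-diagonal
\[
\mathcal{O} = 2\pi i^{-k} \sum_{16 q^4 \mid c} \frac{1}{c} \sum_{n,m} \chi(n)\overline{\chi(m)}\, S(n^2, m^2; c)\, J_{k-1}\!\left(\frac{4\pi nm}{c}\right) h\!\left(\frac{n}{N}\right) h\!\left(\frac{m}{N}\right)
\]
and showing $\mathcal{O} \ll N q^{\varepsilon}$ in the range $N \leq q^{9/2+\varepsilon}$.

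Writing $c = 16 q^4 r$ and focusing on the generic case $(r, 2q) = 1$, the Chinese Remainder Theorem factors $S(n^2, m^2; c)$ as a product of a Kloosterman sum modulo $16r$ and a prime-power sum $S(n^2 \overline{A}, m^2 \overline{A}; q^4)$. In the target range, the Bessel argument is of size $q^5/r$: for $r \gg q^{5+\varepsilon}$ the power-series bound $J_{k-1}(x) \ll_k x^{k-1}$ renders the contribution negligible (using $k \geq 2$), while for smaller $r$ I would invoke the Hankel-type asymptotic to extract oscillatory factors $e(\pm 2nm/c)$. Applying Poisson summation in $n$ and $m$ modulo $q^4$ then produces the complete character sum
\[
\sum_{x \bmod q^4} \chi(x)\, e\!\left(\frac{\alpha x^2 + \beta x}{q^4}\right),
\]
which admits a clean $q$-adic stationary-phase evaluation because $\chi$ has conductor $q^3$ and $x \mapsto x^2$ is at most tamely ramified at $q$. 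Combined with the Weil bound on the modulus-$r$ Kloosterman factor and stationary phase on the archimedean integral produced by the Bessel asymptotic, this shortens the $n,m$-sums and yields cancellation in $\mathcal{O}$.

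The main obstacle I anticipate is a thin near-diagonal zone where $n \equiv \pm m \bmod q^a$ with $a$ close to $4$: there the $q$-adic stationary phase degenerates and the character sum above loses its square-root saving. I would handle this by stratifying $(n,m)$ by the valuation $q^a \| (n \pm m)$, evaluating the prime-power character sum separately on each stratum, and exploiting the fact that the volume of each near-diagonal stratum is proportionally smaller so that the trade-off remains favourable. Balancing this stratification against the small-$r$ oscillatory regime, and summing over $r$ using standard Weil-type bounds, should deliver the claimed $\mathcal{O} \ll N q^{\varepsilon}$ throughout the range $N \leq q^{9/2+\varepsilon}$, completing the proof.
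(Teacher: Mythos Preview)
Your opening matches the paper exactly: open the square, apply Petersson on $\mathcal{B}_k(16q^4)$, the diagonal gives $O(N)$, and everything hinges on the off-diagonal $\mathcal O$. After that, however, the proposal misses the two mechanisms that actually drive the proof.

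First, applying Poisson in $n,m$ at the natural modulus is a dead end. In the transition range $r\sim q^5$ the full modulus $16q^4r$ has size $N^2$, so Poisson is exactly balanced and produces no length reduction; the paper says this explicitly. Your ``Poisson modulo $q^4$'' is not well-defined because $n$ and $m$ also sit inside the mod-$r$ Kloosterman factor, and the character sum you write down (with $\alpha x^2+\beta x$) is not what emerges --- one still has the Kloosterman average over $a$ outside. Evaluating whatever sum arises by $q$-adic stationary phase and then applying Weil to the mod-$r$ piece recovers at best the same $q^2\sqrt{r}$ that direct Weil on $S(n^2,m^2;16q^4r)$ already gives, which leaves $\mathcal O$ at size roughly $N^2q^{1/2}$, far from the target $Nq^{\varepsilon}$. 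The paper's key first move is different: since $(nm,q)=1$ one has $S(\bar c'n^2,\bar c'm^2;q^{4+r})=S(\bar c'nm,\bar c'nm;q^{4+r})$, a Sali\'e sum that evaluates \emph{explicitly} to a single additive character $e(2\bar c' nm/q^{4+r})$ times $q^{2+r/2}$. Reciprocity then pushes this exponential to modulus $c'$, dropping the arithmetic conductor for $(n,m)$ from $q^{4+r}c'$ to $q^3c'$, and only now does Poisson in $n,m$ shorten the sums.

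Second, one round of Poisson is not enough. After the first Poisson the paper gets Jacobi symbols $(\frac{c}{nm})$ and an exponential $e(\overline{q^2}nm/c)$; a \emph{second} reciprocity plus Poisson, this time in $c$, turns the problem into short sums of $\chi'(n)(\frac{n}{c})$ against a Kloosterman sum mod $q^2$ that separates from $n,m$. The endgame is then Cauchy plus Heath--Brown's large sieve for real characters, not Weil-type bounds over $r$. None of this --- the Sali\'e evaluation, the two reciprocity steps, the second Poisson in $c$, or the quadratic large sieve --- appears in your outline, and the stratification in $n\equiv\pm m\pmod{q^a}$ that you propose plays no role in the actual argument. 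As written, the proposal does not contain a route to $\mathcal O\ll Nq^{\varepsilon}$.
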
 
Let $f$ be a holomorphic Hecke form of full level. Let $\mathcal B_k(16q^4)$ be an orthogonal basis of $S_k(16q^4)$ containing the form $f$, as described above. The next result follows immediately from Theorem \ref{thm2}, by using positivity to drop all the terms from the sum except the term corresponding to the given form $f$. Note that 
$$
\|f\|_{16q^4}^2=[\Gamma_0(1):\Gamma_0(16q^4)]\frac{2}{\pi}\frac{\Gamma(k)}{(4\pi)^k}L\left(1,\Sym f\right)\ll_k q^4.
$$
\begin{corollary}
\label{cor-thm2}
For $N\leq q^{\frac{9}{2}+\varepsilon}$ we have
$$
L_f(N)\ll _{k,\varepsilon} \sqrt{N}q^{2+\varepsilon}.
$$
\end{corollary}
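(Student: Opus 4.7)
The approach I would take is to invoke positivity directly on the second-moment inequality of Theorem~\ref{thm2}. Since each summand $\omega_g^{-1}|L_g(N)|^2$ is nonnegative, dropping every term except the one corresponding to $g=f$ yields
$$
\omega_f^{-1}|L_f(N)|^2 \;\le\; \sum_{g\in\mathcal{B}_k(16q^4)}\omega_g^{-1}|L_g(N)|^2 \;\ll_{k,\varepsilon}\; Nq^{\varepsilon}.
$$
So the entire matter reduces to checking that the spectral weight $\omega_f = \|f\|_{16q^4}^2(4\pi)^{k-1}/\Gamma(k-1)$ attached to the distinguished form $f$ is at most $q^{4+o(1)}$.

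For this I would use the standard identity recorded just before the statement of the corollary, namely
$$
\|f\|_{16q^4}^2 \;=\; [\Gamma_0(1):\Gamma_0(16q^4)]\,\frac{2}{\pi}\,\frac{\Gamma(k)}{(4\pi)^k}\,L(1,\Sym f).
$$
The index $[\Gamma_0(1):\Gamma_0(16q^4)]$ is $\ll q^4$, and since $f$ has full level and is held fixed, $L(1,\Sym f)$ is a constant depending only on $f$ (absorbed into the $k$-dependence in the statement of the corollary). Combining these two bounds gives $\omega_f \ll_{k} q^4$, and substituting into the displayed inequality above yields $|L_f(N)|^2 \ll_{k,\varepsilon} Nq^{4+\varepsilon}$; taking square roots produces the claimed bound $L_f(N) \ll_{k,\varepsilon} \sqrt{N}\,q^{2+\varepsilon}$.

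There is essentially no obstacle in this deduction: Theorem~\ref{thm2} has already done all the analytic work, and Corollary~\ref{cor-thm2} is a one-line extraction by positivity combined with the explicit evaluation of $\|f\|_{16q^4}^2$ in terms of the index and the symmetric-square $L$-value at $s=1$. The only (mild) point worth flagging is that the implied constant depends on the fixed form $f$ through $L(1,\Sym f)$, which is harmless because $f$ is independent of $q$.
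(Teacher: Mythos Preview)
Your proposal is correct and is exactly the argument the paper gives: drop all but the $g=f$ term by positivity in Theorem~\ref{thm2}, then use the displayed identity $\|f\|_{16q^4}^2=[\Gamma_0(1):\Gamma_0(16q^4)]\tfrac{2}{\pi}\tfrac{\Gamma(k)}{(4\pi)^k}L(1,\Sym f)\ll_k q^4$ to bound $\omega_f$ and take a square root.
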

Observe that the above bound is better than the trivial bound $L_f(N)\leq N^{1+\varepsilon}$ in the case when $N>q^4$. In \eqref{one} we use the trivial bound in the range $N\leq q^4$, and in the complementary range $N>q^4$ we substitute the bound from Corollary \ref{cor-thm2}. 
\begin{corollary}
\label{cor-thm3}
Let $f\in S_k(1)$ be a form of weight $k$ and full level. Suppose that the conductor of $\chi$ is $M_{\chi}=q^3$ where $q$ is a prime number. Then we have
$$
L\left(\tfrac{1}{2},\text{$\rm{Sym}^2$} f\otimes\chi\right)\ll_{k,\varepsilon} q^{2+\varepsilon}.
$$
\end{corollary}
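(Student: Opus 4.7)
The plan is to combine the dyadic decomposition \eqref{one} with Corollary \ref{cor-thm2} and the trivial estimate, exactly as sketched in the paragraph preceding the statement. The optimization point is $N_0=q^4$: this is where the trivial bound $|L_f(N)|/\sqrt{N}\ll \sqrt{N}\,q^\varepsilon$ and the bound $|L_f(N)|/\sqrt{N}\ll q^{2+\varepsilon}$ coming from Corollary \ref{cor-thm2} coincide, both being of size $q^{2+\varepsilon}$.

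First I would write out \eqref{one} with $\ell=3$, which gives
$$
L\left(\tfrac{1}{2},\Sym f\otimes\chi\right)\ll_{A,\varepsilon} q^{\varepsilon}\sum_{N\text{ dyadic}}\frac{|L_f(N)|}{\sqrt{N}}\left(1+\frac{N}{q^{9/2}}\right)^{-A},
$$
and then split the dyadic sum into three ranges. In the range $N\leq q^4$ I would substitute the trivial bound $|L_f(N)|\ll N^{1+\varepsilon}$ to get a summand of size $\ll \sqrt{N}\,q^{\varepsilon}\ll q^{2+\varepsilon}$. In the intermediate range $q^4<N\leq q^{9/2+\varepsilon}$ I would invoke Corollary \ref{cor-thm2} directly, which also gives a summand of size $\ll q^{2+\varepsilon}$. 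In the tail range $N>q^{9/2+\varepsilon}$ the rapid-decay factor $(1+N/q^{9/2})^{-A}$, with $A$ chosen large in terms of $\varepsilon$, makes the contribution negligible even when paired with the convexity-bound estimate of the summand.

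Summing over the $O(\log q)$ dyadic scales that give a non-negligible contribution and absorbing the logarithmic factor into $q^{\varepsilon}$ would then yield
$$
L\left(\tfrac{1}{2},\Sym f\otimes\chi\right)\ll_{k,\varepsilon} q^{2+\varepsilon},
$$
as required. There is no substantive obstacle at this stage: all of the analytic heavy lifting has already been done in the proofs of Theorem \ref{thm2} and Corollary \ref{cor-thm2}, and the approximate functional equation underlying \eqref{one} is standard. The only point worth verifying is that the balancing scale $N_0=q^4$ lies comfortably inside the range of validity $N\leq q^{9/2+\varepsilon}$ of Corollary \ref{cor-thm2}, so that the trivial regime and the second-moment regime overlap and no intermediate dyadic scale is left uncovered.
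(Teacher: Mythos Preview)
Your proposal is correct and follows essentially the same argument as the paper: use the trivial bound $|L_f(N)|\ll N^{1+\varepsilon}$ for $N\le q^4$, apply Corollary~\ref{cor-thm2} for $q^4<N\le q^{9/2+\varepsilon}$, and kill the tail with the factor $(1+N/q^{9/2})^{-A}$. The balancing point $N_0=q^4$ and the observation that it lies within the range of validity of Corollary~\ref{cor-thm2} are exactly what the paper uses.
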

This proves Theorem \ref{mthm} in the case $\ell=3$ with $\delta_{\ell}=1/4$. Recall that in this case the convexity bound is given by $q^{\frac{9}{4}+\varepsilon}$.

\ack
The author wishes to thank Henryk Iwaniec for introducing him to this subject. The author also thanks Valentin Blomer, Matthew Young and the referee for their helpful comments. A part of this work was done while the author was visiting the Indian Statistical Institute, Bangalore.  


\section{Sketch of the proof}

In this section we make some brief remarks about the main ingredients of the proof. The main layout of the proof is quite simple, but major complications arise due to coprimality issues and the oscillation in the Bessel function outside the transition range. In the rest of the paper we make the following argument rigorous and complete in the case $\ell=3$.  \\
  
Let $\ell=2j+1$. Here the worst case scenario corresponds to getting a nontrivial bound for $L_f(N)$ with $N=q^{\frac{3}{2}(2j+1)}$. Our target is to get a bound for the average
$$
\sum_{g\in \mathcal B_k(q^{3j+1})}\omega_g^{-1}\left|\sum_{n}\lambda_f(n^2)\chi(n)h\left(\frac{n}{q^{\frac{3}{2}(2j+1)}}\right)\right|^2.
$$ 
We open the absolute square, interchange the order of summation and then apply the Petersson formula to the sum over $g$. The sum splits as
$$
\text{diagonal}+\text{off-diagonal}.
$$
The diagonal is easily seen to be bounded by $q^{\frac{3}{2}(2j+1)}$, which reflects the expected size of the average, and is satisfactory for our purpose. The main problem boils down to proving a strong bound for the off-diagonal contribution which is roughly given by
$$
\sum_{c\sim q^{3j+2}}\frac{1}{q^{3j+1}c}\mathop{\sum\sum}_{n, m\sim q^{\frac{3}{2}(2j+1)}}\chi(n)\overline{\chi(m)}S(n^2,m^2;q^{3j+1}c).
$$
Note that here we are just focusing at the transition range of the Bessel function, and we are ignoring the weight functions. Now we want to apply the Poisson summation formula to the sum over $n$ and $m$. The modulus of the sum is $q^{3j+1}c$ which is of size $q^{3(2j+1)}$, the square of the length of the sums over $n$ and $m$. As such Poisson at this point does not reduce the length of the sum. However Poisson in one of the variables give certain advantage in the structure, as the Kloosterman sum dissolves to give rise to a Gauss-Ramanujan type sum. But, as we have seen in many situations related to the symmetric square case (see e.g. \cite{IM}), this is not that helpful and we end up with an usual deadlock situation. \\

To resolve this issue (in Section \ref{rec-poi1}) we use the fact that a large part of the Kloosterman sum can be evaluated explicitly, which is why we started with an average over such a large family. Indeed it turns out that the off-diagonal is essentially given by 
$$
\sum_{c\sim q^{3j+2}}\frac{1}{q^{\frac{1}{2}(3j+1)}c}\mathop{\sum\sum}_{n, m\sim q^{\frac{3}{2}(2j+1)}}\chi(n)\overline{\chi(m)}\left(\frac{cnm}{q}\right)^{j+1}e\left(\frac{2\overline{c}nm}{q^{3j+1}}\right)S(\overline{q^{3j+1}}n^2,\overline{q^{3j+1}}m^2;c).
$$ 
(The quadratic character appears when $j$ is even, i.e. when $3j+1$ is odd.) Now we can use reciprocity to get
$$
\sum_{c\sim q^{3j+2}}\frac{1}{q^{\frac{1}{2}(3j+1)}c}\left(\frac{c}{q}\right)^{j+1}\mathop{\sum\sum}_{n, m\sim q^{\frac{3}{2}(2j+1)}}\chi'(n)\overline{\chi'(m)}e\left(\frac{-2\overline{q^{3j+1}}nm}{c}\right)S(\overline{q^{3j+1}}n^2,\overline{q^{3j+1}}m^2;c),
$$ 
which reduces the modulus from $q^{3j+1}c$ to $q^{2j+1}c$. Here $\chi'(.)=\chi(.)(\frac{.}{q})^{j+1}$. (Notice that $nm\asymp q^{3j+1}c\asymp q^{3(2j+1)}$, so that $e(2nm/q^{3j+1}c)$ is essentially `flat' and can be absorbed in the weight function.) Now we apply the Poisson summation formula on both the sums over $n$ and $m$. The dual sums have length $q^{2j+1}c/q^{\frac{3}{2}(2j+1)}\asymp q^{2j+\frac{3}{2}}$. So there is a reduction in the length. Moreover we also gain structural advantage. The sum at this point is roughly given by
$$
\frac{1}{q^{2j+\frac{3}{2}}}\sum_{c\sim q^{3j+2}}\mathop{\sum\sum}_{n, m\sim q^{2j+\frac{3}{2}}}\chi'(n)\overline{\chi'(m)}\left(\frac{c}{nm}\right)e\left(\frac{\overline{q^{j+1}}nm}{c}\right).
$$
(The explicit evaluation of the character sum which is required for this reduction, is carried out in Section \ref{sec-char}.) Observe that the Kloosterman sum has vanished which is the usual advantage of applying Poisson. Also note that we have achieved a saving of $q^j$, due to the reduction in the length. Next we want to apply the Poisson summation on the sum over $c$. For this (in Section \ref{sec-poi}) we again apply reciprocity to get
$$
\frac{1}{q^{2j+\frac{3}{2}}}\mathop{\sum\sum}_{n, m\sim q^{2j+\frac{3}{2}}}\chi'(n)\overline{\chi'(m)}\sum_{c\sim q^{3j+2}}\left(\frac{c}{nm}\right)e\left(\frac{-\overline{c}nm}{q^{j+1}}\right).
$$
The modulus for the sum over $c$ is given by $nmq^{j+1}$ which is of the size $q^{5j+4}$. Hence after Poisson the dual sum is of length $q^{2j+2}$. In fact we arrive at the following expression
$$
\frac{1}{q^{2j+2}}\mathop{\sum\sum}_{n, m\sim q^{2j+\frac{3}{2}}}\chi'(n)\overline{\chi'(m)}\sum_{c\sim q^{2j+2}}\left(\frac{nm}{c}\right)S(nm,\overline{nm}c;q^{j+1}).
$$
(The explicit evaluation of the associated character sum is carried out in Section \ref{sec-char2}.) We have saved another $\sqrt{q^j}$ due to the reduction in the length of the sum over $c$. But more importantly in the Kloosterman sum we can make a change of variables to make it free from $n$ and $m$. This crucial separation of variable is the key. Interchanging the order of summation we get
$$
\frac{1}{q^{2j+2}}\sum_{c\sim q^{2j+2}}S(1,c;q^{j+1})\sum_{n\sim q^{2j+\frac{3}{2}}}\chi'(n)\left(\frac{n}{c}\right)\sum_{m\sim q^{2j+\frac{3}{2}}}\overline{\chi'(m)}\left(\frac{m}{c}\right).
$$
Now we apply Cauchy and use the Weil bound for the Kloosterman sum. With this we arrive at 
$$
\frac{1}{q^{\frac{3}{2}(j+1)}}\sum_{c\sim q^{2j+2}}\left|\sum_{n\sim q^{2j+\frac{3}{2}}}\chi'(n)\left(\frac{n}{c}\right)\right|^2.
$$
Our last step (in Section \ref{sec-large}) is an application of the large sieve inequality for quadratic characters (see \cite{HB}), which shows that the above expression is dominated by  
$$
\frac{1}{q^{\frac{3}{2}(j+1)}}(q^{2j+2}+q^{2j+\frac{3}{2}})q^{2j+\frac{3}{2}}\ll q^{\frac{5}{2}j+2}.
$$
This gives an upper bound for the contribution from the off-diagonal. Clearly this is more than satisfactory for our purpose. The bound coincides with the size of the diagonal in the case $j=1$, otherwise the off-diagonal is smaller than the diagonal. This shows that our choice of the level is optimal in the case $j=1$, but can be improved otherwise.\\

Observe that we are losing a $\sqrt{q}$ in the last estimate due to the difference in the lengths of the sums over $c$ and $n$, $m$. In the case $\ell=2$, we do not lose this extra $\sqrt{q}$, and in fact the off-diagonal contribution can be shown to be of smaller magnitude compared to that of the diagonal. This is important as in this case we need to use an amplifier and when we introduce an amplifier we gain in the diagonal but lose in the off-diagonal.


\section{Preliminaries}
\label{prelim}

In this section we will briefly recall some fundamental facts about holomorphic forms and their $L$-functions (for details see \cite{IK}). Let $f\in S_k(1)$ be a newform with Fourier expansion 
$$
f(z)=\sum_{n=1}^{\infty}\lambda_f(n)n^{\frac{k-1}{2}}e(nz).
$$ 
For $s=\sigma+it$ with $\sigma>1$, the associated $L$-function is given by 
$$
L(s,f)=\sum_{n=1}^{\infty}\frac{\lambda_f(n)}{n^s}=\prod_p \left(1-\frac{\alpha_f(p)}{p^s}\right)^{-1} \left(1-\frac{\beta_f(p)}{p^s}\right)^{-1}.
$$
The local parameter $\alpha_f(p)$ and $\beta_f(p)$ are related to the normalized Fourier coefficients in the following way
$$
\alpha_f(p)+\beta_f(p)=\lambda_f(p),\;\;\;\;\alpha_f(p)\beta_f(p)=1
$$
Now let $\chi$ be a primitive Dirichlet character of modulus $M_{\chi}$. Then we define the twisted symmetric square $L$-function by the degree three Euler product 
$$
L(s,\Sym f\otimes\chi)=\prod_p \left(1-\frac{\alpha_f^2(p)\chi(p)}{p^s}\right)^{-1} \left(1-\frac{\chi(p)}{p^s}\right)^{-1} \left(1-\frac{\beta_f^2(p)\chi(p)}{p^s}\right)^{-1},
$$ 
for $\sigma>1$. In this half-plane we have
$$
L(s,\Sym f\otimes\chi)=L(2s,\chi^2)\sum_{n=1}^{\infty}\frac{\lambda_f(n^2)\chi(n)}{n^s}.
$$ \\

It is well-known that this $L$-function extends to an entire function and satisfies a functional equation (see \cite{Li}). Indeed we have a completed $L$-function defined as
$$
\Lambda(s,\Sym f\otimes\chi)=M_{\chi}^{3s/2}\gamma(s)L(s,\Sym f\otimes\chi)
$$
where $\gamma(s)$ is essentially a product of three gamma functions $\Gamma(\frac{s+\kappa_j}{2})$, $j=1,2,3$, with $\kappa_j$ depending on the weight of $f$ and the parity of the character $\chi$, such that the functional equation is given by
$$
\Lambda(s,\Sym f\otimes\chi)=\varepsilon(f,\chi)\Lambda(1-s,\Sym f\otimes\chi).
$$
Here $\text{Re}(\kappa_j)>0$ and the $\varepsilon$-factor satisfies $|\varepsilon(f,\chi)|= 1$. Using standard arguments we get that the twisted $L$-value $L(\frac{1}{2},\Sym f\otimes\chi)$, is given by the rapidly converging series (the approximate functional equation)
\begin{align}
\label{afe}
\sum_{n=1}^{\infty}\frac{\lambda_f(n^2)\chi(n)}{\sqrt{n}}V\left(\frac{n}{M_{\chi}^{3/2}}\right)+
\varepsilon(f,\chi)\sum_{n=1}^{\infty}\frac{\lambda_f(n^2)\overline{\chi(n)}}{\sqrt{n}}
V\left(\frac{n}{M_{\chi}^{3/2}}\right),
\end{align}
where
$$
V(y)=\frac{1}{2\pi i}\int_{(3)}
\frac{\gamma\left(\frac{1}{2}+u\right)}{\gamma\left(\frac{1}{2}\right)}
\left(\cos \frac{\pi u}{4A}\right)^{-12A}L(1+2u,\chi^2)y^{-u}\frac{du}{u}.
$$ 
(Here $A$ is a sufficiently large positive integer.) The weight function $V(y)$ satisfies the bound 
$$
y^jV^{(j)}(y)\ll_{j,A} y^{-A}.
$$
Breaking the sum in \eqref{afe} into dyadic blocks, it follows that (see e.g. \cite{IM})
$$
L(\tfrac{1}{2},\Sym f\otimes\chi)\ll_{A,\varepsilon} q^{\varepsilon}\sum_{N}\frac{\left|L_f(N)\right|}{\sqrt{N}}\left(1+\frac{N}{M_{\chi}^{3/2}}\right)^{-A}
$$
where $N$ ranges over the values $2^{\alpha}$ with $-1/2\leq \alpha$, and 
$$
L_f(N)=\sum_{n}\lambda_f(n^2)\chi(n)h(n/N).
$$
Here $h(.)$ is a smooth function supported in $[1,2]$. For any $\varepsilon>0$ we can choose $A$ appropriately so that the contribution from $N>M_{\chi}^{3/2+\varepsilon}$ is negligible. Also for the smaller values of $N$ we can estimate the sum trivially. It turns out that the worst case scenario corresponds to the case where $N\asymp M_{\chi}^{3/2}$. \\

Now $f$ can be considered as a Hecke form in the larger space $S_k(M)$ of cusp forms of level $M$. Then we select an orthogonal basis $\mathcal B_k(M)$ of the space $S_k(M)$ containing the form $f$. For any form $g\in S_k(M)$ we have the Fourier expansion $g(z)=\sum \lambda_g(n)n^{\frac{k-1}{2}}e(nz)$. Let 
$$
\left<g_1,g_2\right>_M=\int_{\Gamma_0(M)\backslash\mathbb H}g_1(z)\overline{g_2(z)}y^{k-2}dxdy
$$ 
denote the Petersson inner product at level $M$. Let $\|g\|_M^2=\left<g,g\right>_M$ denote the Petersson norm at level $M$. Then we have the Petersson formula
\begin{align}
\label{pform}
\frac{\Gamma(k-1)}{(4\pi)^{k-1}}\sum_{g\in \mathcal B_k(M)}\frac{\lambda_g(n)\lambda_g(m)}{\|g\|_M^2}=\delta(n,m)+
2\pi i^{-k}\sum_{c=1}^{\infty}\frac{S(n,m;cM)}{cM}J_{k-1}\left(\frac{4\pi \sqrt{nm}}{cM}\right),
\end{align}
where $S(n,m;cM)$ denotes the Kloosterman sum and $J_{k-1}(.)$ is the $J$-Bessel function of order $k-1$.  \\

It is well known (see \cite{IM} or \cite{GR}) that the Bessel function can be expressed as
\begin{align}
\label{bessel-split}
J_{k-1}(2\pi x)=e(x){W}_k(x)+e(-x)\bar W_k(x)
\end{align}
where $W_k:(0,\infty)\rightarrow \mathbb C$ is a smooth function satisfying the bound
\begin{align}
\label{bessel-bd0}
x^jW_k^{(j)}(x)\ll \max\{x^{k-1},x^{-\frac{1}{2}}\}. 
\end{align}


\section{Reciprocity and Poisson summation - I}
\label{rec-poi1}

Let $f\in S_k(1)$ be a Hecke form and let $\chi$ be a character of conductor $q^3$. Let $\mathcal B=\mathcal B_k(16q^4)$ be an orthogonal basis of $S_k(16q^4)$ containing the given form $f$. Let $N\leq q^{\frac{9}{2}+\varepsilon}$ and set
$$
S:=\frac{\Gamma(k-1)}{(4\pi)^{k-1}}\sum_{g\in \mathcal B}\frac{1}{\|g\|_{16q^4}^2}\left|\sum_{n\in\mathbb Z}\lambda_g(n^2)\chi(n)h\left(\frac{n}{N}\right)\right|^2.
$$ 
In this notation the statement of Theorem \ref{thm2} translates to 
\begin{align}
\label{bdd}
S\ll_{k,\varepsilon} Nq^{\varepsilon}.
\end{align}
Rest of the paper is devoted to proving this bound. \\

Opening the absolute square and interchanging the order of summation we arrive at
$$
S=\mathop{\sum\sum}_{n,m\in\mathbb Z}\chi(n)\overline{\chi(m)}h\left(\frac{n}{N}\right)h\left(\frac{m}{N}\right)\left[\frac{\Gamma(k-1)}{(4\pi)^{k-1}}\sum_{g\in \mathcal B}\frac{\lambda_g(n^2)\lambda_g(m^2)}{\|g\|_{16q^4}^2}\right].
$$ 
Now to the innermost sum we apply the Petersson formula \eqref{pform}. The contribution from the diagonal is dominated by
$\sum_nh\left(\frac{n}{N}\right)^2\ll N$, which is satisfactory for our purpose. Now we turn our attention to the off-diagonal which is given by
\begin{align}
\label{offdiag}
S_O=\sum_{\substack{c=1\\16|c}}^{\infty}\frac{1}{q^4c}\sum_n\sum_m\chi(n)\overline{\chi(m)}h\left(\frac{n}{N}\right)h\left(\frac{m}{N}\right)S(n^2,m^2;q^4c)J_{k-1}\left(\frac{4\pi nm}{q^4 c}\right).
\end{align}
Using a smooth partition of unity we break the sum over $c$ into dyadic blocks and analyse the contribution of each blocks
\begin{align}
\label{offdiag-dy}
S_O(C)=\sum_{\substack{c=1\\16|c}}^{\infty}\frac{1}{q^4c}\sum_n\sum_m\chi(n)\overline{\chi(m)}h\left(\frac{n}{N}\right)h\left(\frac{m}{N}\right)S(n^2,m^2;q^4c)J_{k-1}\left(\frac{4\pi nm}{q^4 c}\right)G\left(\frac{c}{C}\right).
\end{align}
Here $G(x)$ is a smooth function on $(0,\infty)$ with compact support. The transition range for the Bessel function is marked by $C\sim N^2/q^4$, and  we define $B$ by setting $C=\frac{N^2}{q^4B}$. Of course the real challenge lies in dealing with the case where $C$ is near the transition range. However for smaller values of $C$ there are complications arising from the oscillation of the Bessel function. \\

We write $c=q^rc'$ where $(c',q)=1$. Then the Kloosterman sum splits as
$$
S(n^2,m^2;q^4c)=S(\overline{c'}n^2,\overline{c'}m^2;q^{4+r})S(\overline{q^{4+r}}n^2,\overline{q^{4+r}}m^2;c').
$$
Observe that in \eqref{offdiag-dy} $nm$ is coprime with $q$ due to the presence of the character $\chi$. So it follows that
$$
S(\overline{c'}n^2,\overline{c'}m^2;q^{4+r})=S(\overline{c'}nm,\overline{c'}nm;q^{4+r})=2\left(\frac{c'nm}{q}\right)^rq^{2+\frac{r}{2}}\:\text{Re} \:\varepsilon_{q^{4+r}}e\left(\frac{2\overline{c'}nm}{q^{4+r}}\right)
$$
where $\varepsilon_{q^{4+r}}$ is the sign of the quadratic Gauss sum modulo $q^{4+r}$. With this $S_O(C)$ splits into a sum of two similar terms, each of which is an infinite sum parameterized by $r$. A representative term in this sum is given by
\begin{align}
\label{sum1}
\frac{1}{q^{2+\frac{r}{2}}}\sum_{\substack{(c,q)=1\\16|c}}&\frac{1}{c}G\left(\frac{q^rc}{C}\right)\sum_n\sum_m\chi(n)\overline{\chi(m)}\left(\frac{cnm}{q}\right)^re\left(\frac{2\overline{c}nm}{q^{4+r}}\right)S(\overline{q^{4+r}}n^2,\overline{q^{4+r}}m^2;c)\\
\nonumber &\times h\left(\frac{n}{N}\right)h\left(\frac{m}{N}\right)J_{k-1}\left(\frac{4\pi nm}{q^{4+r} c}\right).
\end{align}

\begin{remark}
For the larger values of $r$ we can estimate the sum trivially using the bounds of the Bessel function. Indeed using the bound \eqref{bessel-bd0} we get that the above sum is dominated by 
$$
\ll \frac{1}{q^{2}}\sum_{\substack{c\sim C\\q^r|c}}\frac{c^{\varepsilon}}{\sqrt{c}}\mathop{\sum\sum}_{n,m\ll N} (n^2,m^2,c)\min\left\{\left(\frac{N^2}{q^{4} c}\right)^{k-1},\left(\frac{N^2}{q^{4} c}\right)^{-1/2}\right\},
$$
which is smaller than $N^3/q^{4+r}$, and hence satisfactory for our purpose if $r\geq 5$. 
\end{remark}

We will establish that \eqref{sum1} is dominated by
\begin{align}
\label{bdd2}
\ll Nq^{\varepsilon}
\end{align}
which is what we need to claim \eqref{bdd}. To prove this bound we start by considering the sums over $n$ and $m$. The modulus of the sum is given by $q^{4+r}c$. But we can reduce the modulus to $q^3c$ by applying reciprocity.
Indeed we have
$$
e\left(\frac{2\overline{c}nm}{q^{4+r}}\right)=e\left(\frac{-2\overline{q^{4+r}}nm}{c}\right)
e\left(\frac{2nm}{q^{4+r}c}\right).
$$
We club the second factor with the Bessel function in \eqref{sum1}. Using the expression \eqref{bessel-split} we see that 
$$
e\left(\frac{2nm}{q^{4+r}c}\right)J_{k-1}\left(\frac{4\pi nm}{q^{4+r}c}\right)=e\left(\frac{4nm}{q^{4+r}c}\right)W_k\left(\frac{2nm}{q^{4+r}c}\right)+\bar W_k\left(\frac{2nm}{q^{4+r}c}\right).
$$
The second factor on the right hand side is without oscillation, and hence is more tamed compared to the first factor. We shall now continue our analysis with the first factor. The analysis with the second factor, which we are omitting, is much simpler. At the end it turns out that the bound that we obtain for the contribution of the second factor is better than that of the first factor. \\

We define 
$$
\Phi(x,y;c)=h\left(x\right)h\left(y\right)e\left(\frac{4xy}{c}\right)W_{k}\left(\frac{2xy}{c}\right)
$$
and set (for $r\leq 4$)
\begin{align}
\label{tr}
\mathcal T_{r,C}=\frac{1}{q^{2+\frac{r}{2}}}\sum_{\substack{c=1\\q\nmid c}}^{\infty}\frac{T_r(16c)}{16c}G\left(\frac{q^rc}{C}\right),
\end{align}
where $T_r(c)$ is defined by
\begin{align*}
\mathop{\sum\sum}_{(n,m)\in \mathbb Z^2}\chi(n)\overline{\chi(m)}\left(\frac{cnm}{q}\right)^re\left(\frac{-2\overline{q^{4+r}}nm}{c}\right)S(\overline{q^{4+r}}n^2,\overline{q^{4+r}}m^2,c)\Phi\left(\frac{n}{N},\frac{m}{N};\frac{q^{4+r}c}{N^2}\right).
\end{align*}
Now we are ready to apply the Poisson summation formula on both the sums over $n$ and $m$. Since the modulus of the sum after reciprocity is $q^3c$ which in the transition range is of the size $N^2/q^{1+r}$, the length of the dual sum in the transition range is given by $N/q^{1+r}$. So we will have a reduction in the length of the sum. Moreover there is a structural gain as the Kloosterman sum dissolves to yield character sums of manageable complexity (almost like Gauss sum). \\

\begin{lemma}
\label{poisson}
We have
\begin{align}
\label{trrt}
T_r(c)=\frac{N^2}{q^6c^2}\mathop{\sum\sum}_{(n,m)\in\mathbb Z^2}D_r(n,m;c)I_r(n,m;c),
\end{align}
where the character sum $D_r(n,m;c)$ is given by
$$
\mathop{\sum\sum}_{\alpha,\beta \bmod q^3c}\chi(\alpha)\overline{\chi(\beta)}\left(\frac{\alpha\beta}{q}\right)^re\left(\frac{-2\overline{q^{4+r}}\alpha\beta}{c}\right)S(\overline{q^{4+r}}\alpha^2,\overline{q^{4+r}}\beta^2,c)e\left(\frac{\alpha n+ \beta m}{q^3c}\right),
$$
and the integral $I_r(n,m;c)$ is given by
$$
\mathop{\iint}_{\mathbb R^2}h\left(x\right)h\left(y\right)e\left(\frac{4xyN^2}{q^{4+r}c}\right)W_k\left(\frac{2xyN^2}{q^{4+r}c}\right)e\left(\frac{-(nx+my)N}{q^3c}\right)dxdy.
$$
\end{lemma}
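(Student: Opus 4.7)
My plan is to apply two-dimensional Poisson summation to the sum over $(n,m) \in \mathbb Z^2$ defining $T_r(c)$. The preparatory step is to separate the summand into an arithmetic factor that is doubly periodic in $n$ and $m$ times the smooth, compactly supported weight $\Phi(n/N,m/N;q^{4+r}c/N^2)$. Write
$$
F_c(n,m):=\chi(n)\overline{\chi(m)}\left(\tfrac{cnm}{q}\right)^{r}e\!\left(\tfrac{-2\overline{q^{4+r}}nm}{c}\right)S(\overline{q^{4+r}}n^{2},\overline{q^{4+r}}m^{2};c),
$$
so the summand of $T_r(c)$ is $F_c(n,m)\,\Phi(n/N,m/N;q^{4+r}c/N^{2})$.

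The first substantive step is to identify the joint period of $F_c$. In the $n$-variable: $\chi(n)$ has period $q^{3}$ (the conductor of $\chi$), the Legendre factor $(n/q)^{r}$ has period $q$, the exponential $e(-2\overline{q^{4+r}}nm/c)$ has period $c$, and the Kloosterman sum $S(\overline{q^{4+r}}n^{2},\overline{q^{4+r}}m^{2};c)$ depends on $n$ only through $n^{2}\bmod c$, so also has period $c$. Since the sum in \eqref{tr} is restricted to $q\nmid c$, we have $(c,q)=1$, and hence the joint period is $\mathrm{lcm}(q^{3},c)=q^{3}c$; the same holds in the $m$-variable. I would then split the sum into residue classes,
$$
T_r(c)=\mathop{\sum\sum}_{\alpha,\beta\bmod q^{3}c}F_c(\alpha,\beta)\mathop{\sum\sum}_{\substack{n\equiv\alpha\\ m\equiv\beta\bmod q^{3}c}}\Phi\!\left(\tfrac{n}{N},\tfrac{m}{N};\tfrac{q^{4+r}c}{N^{2}}\right).
$$

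Next, I would apply two-variable Poisson summation to the inner lattice sum. Parametrising $n=\alpha+q^{3}c\,u$, $m=\beta+q^{3}c\,v$ and substituting $x=(\alpha+q^{3}cu)/N$, $y=(\beta+q^{3}cv)/N$ in the Fourier transform produces three things: (i) the Jacobian $N^{2}/(q^{3}c)^{2}=N^{2}/(q^{6}c^{2})$, which gives the overall prefactor; (ii) a class-dependent phase $e((\alpha n+\beta m)/(q^{3}c))$ which, after folding the outer finite $(\alpha,\beta)$-sum back around $F_c$, assembles exactly into the character sum $D_r(n,m;c)$; and (iii) the Fourier integral against the weight, which upon expanding $\Phi$ is precisely $I_r(n,m;c)$ as displayed. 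Summing over the dual variables $(n,m)\in\mathbb Z^{2}$ yields the claimed identity \eqref{trrt}.

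There is essentially no analytic obstacle here: $\Phi$ is smooth with compact support in $(x,y)$, so Poisson applies without convergence issues, and the double sum over residue classes is finite. The one point that genuinely requires the coprimality $(c,q)=1$ is the determination of the period as $q^{3}c$ (rather than something larger); this is why the condition $q\nmid c$ in the definition \eqref{tr} is essential for the clean form of the conclusion.
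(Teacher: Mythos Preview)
Your proposal is correct and follows essentially the same route as the paper: split the sum over $(n,m)$ into residue classes modulo $q^{3}c$, apply Poisson summation to the inner lattice sum, and change variables to extract the prefactor $N^{2}/(q^{6}c^{2})$, the additive character $e((\alpha n+\beta m)/q^{3}c)$, and the Fourier integral $I_r(n,m;c)$. Your proof is in fact more explicit than the paper's in justifying the period $q^{3}c$ via the coprimality $(c,q)=1$, which the paper leaves implicit.
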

\begin{proof}
First we break the sum over $n$ and $m$ into congruence classes modulo $q^3c$ to get
\begin{align*}
\mathop{\sum\sum}_{\alpha,\beta \bmod q^3c}\chi(\alpha)\overline{\chi(\beta)}&\left(\frac{\alpha\beta}{q}\right)^re\left(\frac{-2\overline{q^{4+r}}\alpha\beta}{c}\right)S(\overline{q^{4+r}}\alpha^2,\overline{q^{4+r}}\beta^2,c)\\
&\times \mathop{\sum\sum}_{(n,m)\in\mathbb Z^2}\Phi\left(\frac{\alpha+nq^3c}{N},\frac{\beta+mq^3c}{N};\frac{q^{4+r}c}{N^2}\right).
\end{align*}
Now applying Poisson summation formula and making a change of variables, we get that the inner sum over $(n,m)$ is given by
$$
\frac{N^2}{q^6c^2}\mathop{\sum\sum}_{(n,m)\in\mathbb Z^2}e\left(\frac{\alpha n+ \beta m}{q^3c}\right)
\mathop{\iint}_{\mathbb R^2}\Phi\left(x,y;\frac{q^{4+r}c}{N^2}\right)
e\left(\frac{-(nx+my)N}{q^3c}\right)dxdy.
$$
The lemma follows after rearranging the order of summations and integration.
\end{proof}
\begin{remark}
We conclude this section with a simple observation. Using integration-by-parts on $I_r(n,m;c)$ and the analytic properties of the Bessel function we get that we can basically ignore the contribution coming from the terms in \eqref{tr} with $c$ large, e.g. $c>Q=q^{2010}$, or from the terms in \eqref{trrt} with $\min\{|n|,|m|\}>Q$. 
\end{remark}


\section{The character sum $D_r(n,m;c)$}
\label{sec-char}

In this section we will explicitly compute the character sum $D_r(n,m;c)$ which appears in Lemma~\ref{poisson}. First since $(c,q)=1$ we have the factorization
\begin{align}
\label{char-sum-d-r}
D_r(n,m;c)=A_r(\bar cn,\bar cm;q^3)B_r(\overline{q^3}n,\overline{q^3}m;c)
\end{align}
where
$$
A_r(n,m;q^3)=\mathop{\sum\sum}_{\alpha,\beta \bmod q^3}\chi(\alpha)\overline{\chi(\beta)}\left(\frac{\alpha\beta}{q}\right)^re\left(\frac{\alpha n+ \beta m}{q^3}\right),
$$
and
\begin{align}
\label{char-b}
B_r(n,m;c)=\mathop{\sum\sum}_{\alpha,\beta \bmod c}e\left(\frac{-2\overline{q^{4+r}}\alpha\beta}{c}\right)S(\overline{q^{4+r}}\alpha^2,\overline{q^{4+r}}\beta^2,c)e\left(\frac{\alpha n+\beta m}{c}\right).
\end{align}
The first character sum $A_r(\bar cn,\bar cm;q^3)$, in fact, does not depend on $c$. This is because of the nice feature that both $\chi$ and $\overline{\chi}$ appear in the sum. Let 
\begin{align}
\label{an-def}
a_n=\sum_{\alpha\bmod q^3}\chi(\alpha)\left(\frac{\alpha}{q}\right)^re\left(\frac{\alpha n}{q^3}\right).
\end{align}
Since $\chi$ is a primitive character of modulus $q^3$, it follows that $a_n=0$ whenever $q|n$. We conclude that 
$$
a_n=\overline{\chi(n)}\left(\frac{n}{q}\right)^ra_1.
$$ 
Moreover we note that we have $|a_1|\leq q^{3/2}$. The following lemma is obvious.
\begin{lemma}
\label{char1}
We have
$$
A_r(\bar cn,\bar cm;q^3)=\overline{\chi(n)}\chi(-m)\left(\frac{-nm}{q}\right)^r|a_1|^2.
$$\\
\end{lemma}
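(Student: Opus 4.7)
The plan is to exploit the fact that $A_r(n,m;q^3)$ factors cleanly over the $\alpha$ and $\beta$ summations, so the lemma reduces almost immediately to the given evaluation of $a_n$.

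First I would write
\begin{align*}
A_r(n,m;q^3) &= \left(\sum_{\alpha \bmod q^3}\chi(\alpha)\left(\frac{\alpha}{q}\right)^r e\left(\frac{\alpha n}{q^3}\right)\right)\left(\sum_{\beta \bmod q^3}\overline{\chi(\beta)}\left(\frac{\beta}{q}\right)^r e\left(\frac{\beta m}{q^3}\right)\right)\\
&= a_n\cdot b_m,
\end{align*}
where $a_n$ is the sum in \eqref{an-def} and $b_m$ denotes the second factor. The second factor is then identified with $\overline{a_{-m}}$: taking the complex conjugate of $a_{-m}$, the exponential $e(-\alpha m/q^3)$ becomes $e(\alpha m/q^3)$, $\chi(\alpha)$ is replaced by $\overline{\chi(\alpha)}$, and the Legendre symbol $(\alpha/q)^r$ is left unchanged as it is real-valued.

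Next I would insert the formula $a_n = \overline{\chi(n)}(n/q)^r a_1$ into both pieces. For $b_m$ this gives $b_m = \overline{a_{-m}} = \chi(-m)(-m/q)^r \overline{a_1}$, and multiplying yields
$$A_r(n,m;q^3) = \overline{\chi(n)}\chi(-m)\left(\frac{-nm}{q}\right)^r|a_1|^2.$$
Finally I would specialize $n\mapsto \bar c n$, $m\mapsto \bar c m$. Since $(c,q)=1$ the character values $\chi(\bar c)$ and $\overline{\chi(\bar c)}$ cancel against each other, and $(\bar c^2/q)^r = 1$ because the argument is a square. This recovers the claimed identity.

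There is no real obstacle here; the only thing to be careful about is the conjugation step relating $b_m$ to $a_{-m}$, and the observation that the character-plus-Legendre-symbol combination behaves multiplicatively under the substitution $n\mapsto \bar c n$. Both are formal manipulations that depend only on $(c,q)=1$ and on the formula for $a_n$ noted in the paragraph preceding the lemma.
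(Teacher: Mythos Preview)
Your argument is correct and is exactly the intended one: the paper itself supplies no proof beyond calling the lemma ``obvious'' after recording the formula $a_n=\overline{\chi(n)}(\tfrac{n}{q})^r a_1$ and remarking that the $c$-dependence cancels because both $\chi$ and $\overline\chi$ appear. Your factorization $A_r=a_n\overline{a_{-m}}$ followed by the substitution $n\mapsto\bar cn$, $m\mapsto\bar cm$ is precisely this computation made explicit.
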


The other character sum is little delicate, but nevertheless it can also be evaluated. To this end, for any odd integer $d$, we define
$$
C(n,m;d)=\sideset{}{^\star}\sum_{\substack{a\bmod d\\an\equiv m \bmod d}}\left(\frac{a}{d}\right).
$$
Clearly $C(n,m;d)$ is multiplicative as a function of $d$. The following lemma can be proved quite easily.
\begin{lemma}
\label{char11}
Let $p$ be a prime. Then we have
$$
C(n,m;p^{\ell})=
\begin{cases}
\left(\frac{n^*m^*}{p^{\ell}}\right)p^j &\text{if $n=p^jn^*$, $m=p^jm^*$, $p\nmid n^*m^*$, $j<\ell$},\\
\phi(p^{\ell}) &\text{if $\ell$ is even, $p^{\ell}|(n,m)$},\\
0 &\text{otherwise}.
\end{cases}
$$\\
\end{lemma}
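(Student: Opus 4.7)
The plan is to run a case analysis on the $p$-adic valuations of $n$ and $m$, using the rigidity forced by requiring $a$ to be a unit modulo $p^{\ell}$. Since $a$ is coprime to $p$, the valuation $v_p(an) = v_p(n)$, so the congruence $an \equiv m \pmod{p^{\ell}}$ with any unit solution forces either $v_p(n) = v_p(m)$ (when both are $<\ell$) or $v_p(n), v_p(m) \geq \ell$. Any other valuation configuration makes the sum empty, yielding the "otherwise" entry $0$ of the table.

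If $p^{\ell} \mid n$ and $p^{\ell} \mid m$, the congruence degenerates to $0 \equiv 0$ and every unit $a$ contributes, so $C(n,m;p^{\ell}) = \sum_{a}^{\star} \left(\frac{a}{p^{\ell}}\right) = \sum_{a}^{\star}\left(\frac{a}{p}\right)^{\ell}$. When $\ell$ is even this collapses to $\phi(p^{\ell})$, matching the second case of the lemma. When $\ell$ is odd and $p^{\ell}\mid(n,m)$ the sum becomes $p^{\ell-1}\sum_{b=1}^{p-1}\left(\frac{b}{p}\right) = 0$, which is correctly absorbed into the "otherwise" clause.

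The main case is $n=p^j n^{\ast}$, $m=p^j m^{\ast}$ with $p\nmid n^{\ast}m^{\ast}$ and $j<\ell$. Dividing the congruence by $p^j$ reduces it to $a n^{\ast} \equiv m^{\ast} \pmod{p^{\ell-j}}$, which, since $n^{\ast}$ is a unit, has the unique solution $a \equiv m^{\ast}\overline{n^{\ast}}\pmod{p^{\ell-j}}$ and therefore lifts to exactly $p^j$ residue classes modulo $p^{\ell}$. Because all these lifts share the same residue modulo $p$, they share the same value of $\left(\frac{a}{p^{\ell}}\right)=\left(\frac{a}{p}\right)^{\ell}=\left(\frac{n^{\ast}m^{\ast}}{p}\right)^{\ell} = \left(\frac{n^{\ast}m^{\ast}}{p^{\ell}}\right)$, giving the claimed value $p^j\left(\frac{n^{\ast}m^{\ast}}{p^{\ell}}\right)$. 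I do not foresee a real obstacle here; the argument is pure bookkeeping on $p$-adic valuations together with the multiplicativity of the Legendre/Jacobi symbol and the observation that $\left(\frac{\cdot}{p^{\ell}}\right)$ factors through reduction mod $p$.
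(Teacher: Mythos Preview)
Your argument is correct and is precisely the elementary verification the paper has in mind; the paper itself does not supply a proof, stating only that the lemma ``can be proved quite easily.'' There is nothing to compare against beyond this.
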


To study $B_r(n,m;c)$ we also need the analogue of $C(n,m;d)$ for $d$ a power of $2$. This is slightly more involved as there are three non-trivial quadratic characters, namely $\chi_{-4}$, $\chi_8$ and $\chi_{-8}$, which have a 2-primary modulus. For $\eta\geq 4$, let 
$$
C_{\pm}(n,m; 2^{\eta})=\sideset{}{^\star}\sum_{\substack{a\bmod 2^{\eta}\\an\equiv m \bmod 2^{\eta}}}\psi_{\pm}(a).
$$
Here 
$$
\psi_{+}=\begin{cases}\chi_0 &\text{if $\eta$ is even,}\\\chi_{8} &\text{if $\eta$ is odd,}\end{cases}
$$ 
and $\psi_{-}=\psi_{+}\chi_{-4}$.
\begin{lemma}
\label{char12}
For $\eta$ even, we have
$$
C_{\pm}(n,m; 2^{\eta})=
\begin{cases}
\psi_{\pm}(n^*m^*)2^j &\text{if $n=2^jn^*$, $m=2^jm^*$, $2\nmid n^*m^*$, $j+1<\eta$},\\
\phi(2^{\eta}) &\text{if the character is trivial and $2^{\eta-1}|(n,m)$},\\
0 &\text{otherwise}.
\end{cases}
$$
For $\eta$ odd, we have
$$
C_{\pm}(n,m; 2^{\eta})=
\begin{cases}
\psi_{\pm}(n^*m^*)2^j &\text{if $n=2^jn^*$, $m=2^jm^*$, $2\nmid n^*m^*$, $j+2<\eta$},\\
0 &\text{otherwise}.
\end{cases}
$$\\
\end{lemma}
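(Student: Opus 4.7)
The plan is to follow the strategy of Lemma~\ref{char11}, but with extra book-keeping because there are three nontrivial real characters of $2$-power conductor, namely $\chi_{-4}, \chi_8, \chi_{-8}$, with conductors $4, 8, 8$, rather than a single Legendre symbol. The argument is essentially an orthogonality computation once the congruence $an\equiv m \pmod{2^{\eta}}$ with $a$ odd is parametrized cleanly.

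First, I would carry out a $2$-adic case analysis on $v_2(n)$ and $v_2(m)$. Write $n = 2^{v} n^{*}$ and $m = 2^{w} m^{*}$ with $n^{*}, m^{*}$ odd. For $a \in (\mathbb{Z}/2^{\eta})^{\times}$ satisfying $an \equiv m \pmod{2^{\eta}}$: if $v \neq w$ and $\min(v,w) < \eta$, then $v_2(an) = v$ while $v_2(m) = w$, so no solutions exist and $C_{\pm}=0$; if $v, w \geq \eta$, every odd $a$ is a solution; if $v = w = j < \eta$, the congruence forces $a \equiv a_{0} \pmod{2^{\eta-j}}$ with $a_{0} \equiv m^{*}(n^{*})^{-1}$, yielding $2^{j}$ lifts $\{a_{0} + k \cdot 2^{\eta-j} : 0 \leq k < 2^{j}\}$ in $(\mathbb{Z}/2^{\eta})^{\times}$.

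The core computation in the generic case $v = w = j < \eta$ is then
\begin{equation*}
C_{\pm}(n,m;2^{\eta}) \;=\; \sum_{k=0}^{2^{j}-1} \psi_{\pm}\bigl(a_{0} + k \cdot 2^{\eta-j}\bigr).
\end{equation*}
By orthogonality on the subgroup $(1 + 2^{\eta-j}\mathbb{Z})/(1 + 2^{\eta}\mathbb{Z})$, this equals $2^{j}\psi_{\pm}(a_{0})$ when the conductor of $\psi_{\pm}$ divides $2^{\eta-j}$, and vanishes otherwise. The conductor conditions translate to $\eta - j \geq 2$ (i.e.\ $j+1 < \eta$) when $\eta$ is even, where the obstruction comes from $\psi_{-} = \chi_{-4}$ of conductor $4$, and to $\eta - j \geq 3$ (i.e.\ $j+2 < \eta$) when $\eta$ is odd, since $\psi_{\pm} \in \{\chi_{8}, \chi_{-8}\}$ has conductor $8$. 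Using that $\psi_{\pm}$ is real, $\psi_{\pm}(a_{0}) = \psi_{\pm}(m^{*}(n^{*})^{-1}) = \psi_{\pm}(n^{*}m^{*})$, producing the stated main term $\psi_{\pm}(n^{*}m^{*})\,2^{j}$.

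The main obstacle is not any deep input but the bookkeeping required to reconcile the boundary values of $j$ with the piecewise statement. For $\eta$ even, I would check directly that the case $j = \eta - 1$ gives $\sum^{\star}_{a} \psi_{+}(a) = \phi(2^{\eta})$ when $\psi_{+}$ is trivial (absorbed into the middle clause ``$2^{\eta-1} \mid (n,m)$''), and $\sum^{\star}_{a} \chi_{-4}(a) = 0$ for $\psi_{-}$. For $\eta$ odd, both $j = \eta - 1$ and $j = \eta - 2$ must give $0$: when $j = \eta - 2$ the two lifts of $a_{0}$ to $(\mathbb{Z}/2^{\eta})^{\times}$ differ by $4$, and $\chi_{\pm 8}(1) + \chi_{\pm 8}(5) = 0$ makes the partial sum vanish, while $j = \eta - 1$ reduces to the full character sum of $\chi_{\pm 8}$ which also vanishes. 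The cases $v_{2}(n), v_{2}(m) \geq \eta$ collapse similarly. These explicit boundary checks confirm that all edge cases merge cleanly into the ``otherwise $0$'' clause (or, for trivial $\psi_{+}$, into the $\phi(2^{\eta})$ clause), completing the plan.
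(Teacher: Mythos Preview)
The paper does not supply a proof of this lemma; it is stated without argument, just as the preceding Lemma~\ref{char11} is. Your approach is correct and is the natural one: reduce the congruence $an\equiv m\pmod{2^{\eta}}$ to $a\equiv m^{*}(n^{*})^{-1}\pmod{2^{\eta-j}}$ when $v_2(n)=v_2(m)=j<\eta$, then sum $\psi_{\pm}$ over the resulting coset of $1+2^{\eta-j}\mathbb{Z}$ and invoke orthogonality against the conductor of $\psi_{\pm}$. Your boundary checks (the cases $j=\eta-1$ for $\eta$ even, and $j\in\{\eta-2,\eta-1\}$ for $\eta$ odd, as well as $\min(v_2(n),v_2(m))\geq\eta$) are handled correctly and account for the way the statement folds the trivial-character case $\psi_{+}=\chi_0$ at $j=\eta-1$ into the $\phi(2^{\eta})$ clause.
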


Now for general modulus $c=d2^{\eta}$, with $d$ odd, we define 
$$
C_{\pm}(n,m;c)=\sideset{}{^\star}\sum_{\substack{a\bmod c\\an\equiv m \bmod c}}\left(\frac{a}{d}\right)\psi_{\pm}(a).
$$
Clearly we have the factorization $C_{\pm}(n,m;c)=C(n,m;d)C_{\pm}(n,m;2^{\eta})$. From the above lemmas we conclude that the character sum $C(n,m;d)$ splits in the following fashion: Let $(n,m)=\delta$, and write $n=\delta n^*$ and $m=\delta m^*$. Also write $c=c_1c_2$ with $c_1|(nm)^{\infty}$, $(c_2,nm)=1$ and $c_1=c_{11}c_{12}^2$ with $c_{11}$ square-free. It follows that the sum vanishes unless $c_1|\delta^{\infty}$. In this case we have the following:
\begin{corollary}
\label{cor}
Using the above notations we have
$$
C_{\pm}(n,m;c)=\left(\frac{n^*m^*}{c_{11}c_2}\right)\psi_{\pm}(n^*m^*)\tilde C(\delta;c_1),
$$
where $\tilde C(\delta;c_1)$ depends only on $\delta$, the gcd of $n$ and $m$, and on $c_1$. Moreover we have 
$$
\left|\tilde C(\delta;c_1)\right|\leq (\delta_2^2,c_1),
$$
where $\delta=\delta_1\delta_2^2$ with $\delta_1$ square-free.
\end{corollary}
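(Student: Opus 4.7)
The plan is to evaluate $C_\pm(n,m;c)$ prime by prime, using the multiplicativity of the congruence $an \equiv m \pmod{c}$ together with the explicit prime-power formulas of Lemmas \ref{char11} and \ref{char12}, and then to read off both the factorization and the size bound from the local contributions.

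First, the identity $C_\pm(n,m;c) = C(n,m;d)\,C_\pm(n,m;2^\eta)$ reduces the computation to the odd modulus $d$ and the $2$-power modulus $2^\eta$ separately, and $C(n,m;d)$ is multiplicative in $d$ by CRT. I would split $d = d_1 d_2$ according to whether each prime lies in $c_1$ or $c_2$, and likewise for the $2$-part. For a prime $p^\ell \| d_2$ one has $p \nmid nm$, so Lemma \ref{char11} applies with $j = 0$ and yields $(nm/p^\ell)$; since $(p,\delta)=1$ this equals $(n^*m^*/p^\ell)$, and the product over $p \mid d_2$ is $(n^*m^*/d_2)$. The analogous computation on the $2$-part of $c_2$ via Lemma \ref{char12} contributes $\psi_\pm(n^*m^*)$.

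Next I turn to primes $p \mid c_1$. By hypothesis $c_1 \mid (nm)^\infty$, so $p \mid nm$; if moreover $p \nmid \delta$, then $p$ divides exactly one of $n,m$, which lands outside both cases of Lemma \ref{char11}/\ref{char12} and forces $C(n,m;p^\ell)=0$. This is the asserted support condition $c_1 \mid \delta^\infty$. On the support, the nonvanishing local contribution is either (i) $p^{v_p(\delta)}(n^*m^*/p^\ell)$ when $v_p(n)=v_p(m)=v_p(\delta) < \ell$, or (ii) $\phi(p^\ell)$ when $v_p(n),v_p(m) \geq \ell$ (the latter forcing $\ell$ even at odd primes). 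Since $(a/p^\ell)=(a/p)^\ell$ for $(a,p)=1$, the character in case (i) collapses to $(n^*m^*/p)$ when $\ell$ is odd and to $1$ when $\ell$ is even; by the definition of $c_{11}$ the surviving odd-$\ell$ characters reassemble into $(n^*m^*/c_{11})$, while the remaining integer factors depend only on $v_p(\delta)$ and $v_p(c_1)$ and are therefore packaged as $\tilde C(\delta;c_1)$. The parallel analysis of the $2$-primary part of $c_1$ via Lemma \ref{char12} feeds one more layer into $\tilde C$ and accounts for the $\psi_\pm$ dependence there.

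For the size bound I would multiply the local estimates: in case (i) the local factor has absolute value $p^{v_p(\delta)}$, while in case (ii) it is $\phi(p^\ell) < p^\ell$. Each of these is controlled by the corresponding $p$-part of $(\delta_2^2, c_1)$ once the parity of $v_p(\delta)$ is tracked, and multiplying across primes gives the asserted inequality. The main technical nuisance I anticipate is the $2$-primary bookkeeping in Lemma \ref{char12}, where the parity of $\eta$ and the choice of $\psi_\pm$ interact with the primitive quadratic characters $\chi_{-4}, \chi_{\pm 8}$ to produce several subcases that must be reconciled with $v_2(\delta)$; once these are navigated the local bounds assemble multiplicatively to yield $|\tilde C(\delta;c_1)| \leq (\delta_2^2, c_1)$.
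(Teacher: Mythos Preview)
Your approach---reducing to prime powers via multiplicativity and then invoking Lemmas~\ref{char11} and~\ref{char12} locally---is exactly the one the paper intends; the paper itself offers no proof beyond the words ``from the above lemmas we conclude.'' Your derivation of the factorization into $(\tfrac{n^*m^*}{c_{11}c_2})\psi_\pm(n^*m^*)$ times a quantity depending only on $(\delta,c_1)$ is correct, including the key observation that nonvanishing at a prime $p\mid c_1$ forces $v_p(n)=v_p(m)=v_p(\delta)$ and hence $p\nmid n^*m^*$.

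There is, however, a genuine gap in the size bound. Your sentence ``each of these is controlled by the corresponding $p$-part of $(\delta_2^2,c_1)$ once the parity of $v_p(\delta)$ is tracked'' does not hold in case~(i) when $e:=v_p(\delta)$ is odd and $e<\ell:=v_p(c_1)$: the local factor then has absolute value $p^e$, whereas $v_p(\delta_2^2)=e-1$, so the $p$-part of $(\delta_2^2,c_1)$ is only $p^{e-1}$. Concretely, take $c_1=p^2$ and $\delta=p$; Lemma~\ref{char11} gives the local value $p$, but $(\delta_2^2,c_1)=(1,p^2)=1$. Tracking parity does not repair this---the local bound is simply too small by a factor of $p$. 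Multiplying over primes, the estimate you can actually prove is $|\tilde C(\delta;c_1)|\le (\delta_2^2,c_1)\cdot\prod_{p:\,v_p(\delta)\text{ odd},\,v_p(\delta)<v_p(c_1)}p\le (\delta_2^2,c_1)\,\delta_1$. This is harmless for the paper's final estimates (everything is eventually summed with $q^\varepsilon$ losses over $\delta\le Q$), and may well reflect a minor imprecision in the stated corollary itself, but your argument as written does not yield the inequality as stated.
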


Now we are ready to evaluate the character sum $B_r(n,m;c)$. Set $\varepsilon_d = 1$ if $d\equiv 1 \bmod 4$, and $\varepsilon_d = i$ if $d\equiv 3 \bmod 4$. 

\begin{lemma}
\label{char2}
Let $c=2^{\eta}d$ with $d$ odd, $(q,d)=1$ and $\eta\geq 4$. Then $B_r(n,m;c)=0$ unless $4|(n,m)$, and $(m,c)=(n,c)$. Write $n=2n'$ and $m=2m'$. Then we have 
$$
B_r(n,m;c)=c^{\frac{3}{2}}\varepsilon_de\left(\frac{q^{4+r}n'm'}{c}\right)\left\{C_{+}(q^{4+r}n,-m;c)+i\chi_{-4}(d)C_{-}(q^{4+r}n,-m;c)\right\}.
$$
\end{lemma}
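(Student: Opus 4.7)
The plan is to unfold the Kloosterman sum inside $B_r(n,m;c)$, complete the square to decouple the $(\alpha,\beta)$-summation, and then recognize the residual linear sum in $x$ as the character sum $C_\pm$.

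I would begin by writing
$$
S(\overline{q^{4+r}}\alpha^2,\overline{q^{4+r}}\beta^2;c)=\sideset{}{^\star}\sum_{x\bmod c}e\!\left(\frac{\overline{q^{4+r}}(\alpha^2 x+\beta^2\bar{x})}{c}\right)
$$
and absorbing the prefactor $e(-2\overline{q^{4+r}}\alpha\beta/c)$ from \eqref{char-b}. Since $(x,c)=1$, the combined quadratic form in $(\alpha,\beta)$ telescopes via the identity $\alpha^2 x-2\alpha\beta+\beta^2\bar{x}=x(\alpha-\beta\bar{x})^2$. The change of variables $\gamma=\alpha-\beta\bar{x}$ is a bijection on $(\mathbb{Z}/c)^2$ for each fixed $x$, and splits the linear phase as $e(\gamma n/c)\,e(\beta(\bar{x}n+m)/c)$. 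Summing over $\beta\bmod c$ collapses to $c$ times an indicator, so only those $x$ with $\bar{x}n\equiv -m\bmod c$ contribute; this condition is solvable with $(x,c)=1$ only when $(n,c)=(m,c)$, which is one of the claimed necessary conditions.

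What remains is
$$
B_r(n,m;c)=c\sideset{}{^\star}\sum_{\substack{x\bmod c\\\bar{x}n\equiv -m\,(c)}}\sum_{\gamma\bmod c}e\!\left(\frac{\overline{q^{4+r}}x\gamma^2+n\gamma}{c}\right).
$$
I would evaluate the inner quadratic Gauss sum via CRT with $c=2^{\eta}d$ and $d$ odd. On the mod-$d$ side, $2$ is a unit, so completion of the square yields $\varepsilon_d\sqrt{d}\bigl(\tfrac{\overline{q^{4+r}}x}{d}\bigr)$ times the phase $e(-n^2q^{4+r}\overline{4x}/d)$. On the mod-$2^{\eta}$ side ($\eta\geq 4$), the standard dyadic argument---writing $\gamma=\gamma_0+2^{\eta-1}\gamma_1$ and summing over $\gamma_1\bmod 2$---first forces $2\mid n$; substituting $n=2n'$ and iterating the reduction once more forces $2\mid n'$, i.e.\ $4\mid n$ (and hence $4\mid m$, as $(n,c)=(m,c)$). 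The surviving contribution is $\sqrt{2^{\eta}}$ times a quadratic phase and one of the sign characters $\psi_+(\overline{q^{4+r}}x)$ or $\psi_-(\overline{q^{4+r}}x)$; the weight $i\chi_{-4}(d)$ on the $\psi_-$ piece comes from the fourth-root-of-unity factor inherent to the $2$-adic quadratic Gauss sum, together with the Gauss reciprocity that exchanges the odd and even parts of the CRT decomposition.

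Recombining the two CRT pieces produces the prefactor $c^{3/2}\varepsilon_d$, the character $(\tfrac{\cdot}{d})\psi_\pm$ evaluated at $\overline{q^{4+r}}x$, and a global phase $e(-n^2 q^{4+r}\overline{4x}/c)$. Using the constraint $\bar{x}n\equiv -m\bmod c$ to eliminate $\overline{4x}$, a short computation with $n=2n'$ and $m=2m'$ reduces the phase to $e(q^{4+r}n'm'/c)$, which is independent of $x$ and factors out. Finally, introducing the summation variable $a=\overline{q^{4+r}}\bar{x}$ turns the character into $(\tfrac{a}{d})\psi_\pm(a)$ (because $q^{2(4+r)}$ is a square, so its Legendre and $\psi_\pm$ values are trivial) and turns the constraint $\bar{x}n\equiv -m\bmod c$ into $a\cdot q^{4+r}n\equiv -m\bmod c$, matching the definition of $C_\pm(q^{4+r}n,-m;c)$ exactly. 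The principal technical obstacle is the dyadic Gauss sum with linear shift: one must track the successive nonvanishing conditions ($2\mid n$, then $4\mid n$) and the precise $i\chi_{-4}(d)$-weighting that separates the $\psi_+$ and $\psi_-$ contributions.
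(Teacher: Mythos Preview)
Your overall plan---open the Kloosterman sum, complete the square via $\gamma=\alpha-\beta\bar x$, collapse the $\beta$-sum to a congruence, evaluate the quadratic Gauss sum in $\gamma$, and change variables to $a=\overline{q^{4+r}}\bar x$---is exactly the route the paper takes (modulo the cosmetic reorganization that the paper does the $\alpha$-Gauss sum first and the $\beta$-sum second). The phase computation and the identification with $C_\pm$ at the end are correct.

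There is, however, a genuine gap in your derivation of the condition $4\mid n$. The dyadic reduction $\gamma=\gamma_0+2^{\eta-1}\gamma_1$ applied to the $2$-adic Gauss sum $\sum_{\gamma}e((A\gamma^2+n\gamma)/2^\eta)$ with $A$ odd forces $2\mid n$, as you say; but iterating it a second time does \emph{not} force $2\mid n'$. Once $n=2n'$ you can complete the square mod $2^\eta$ regardless of the parity of $n'$, and the resulting Gauss sum is nonzero for all $\eta\geq 2$. The further restriction $4\mid n$ arises instead at the step you called ``a short computation'': the congruence $\bar x n\equiv -m\bmod c$ only determines $\bar x n'\bmod c/2$, so when you substitute into the phase $e(-q^{4+r}\bar x n'^2/c)$ you pick up an extra factor $(-1)^{kn'}$, where $k\in\{0,1\}$ labels the two lifts of $x$ from $\bmod\,c/2$ to $\bmod\,c$. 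For $n'$ even this factor is $+1$ and the phase becomes $e(q^{4+r}n'm'/c)$ as claimed; for $n'$ odd the two lifts contribute with opposite signs, and one must check (as the paper does) that the character values $(\tfrac{a}{d})\psi_\pm(a)$ agree on the two lifts, so that the contributions genuinely cancel. This is the only point that needs repair; the rest of your argument goes through.
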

\begin{proof}
Applying a change of variables in \eqref{char-b}, we get
$$
B_r(n,m;c)=\mathop{\sum\sum}_{\alpha,\beta \bmod c}e\left(\frac{-2\alpha\beta}{c}\right)S(\alpha^2,\beta^2;c)e\left(\frac{\alpha q^{4+r}n+\beta m}{c}\right).
$$
Opening the Kloosterman sum  and rearranging the order of summations, we get 
$$
\sideset{}{^\star}\sum_{a\bmod c}\sum_{\beta\bmod c}e\left(\frac{\overline{a}\beta^2+\beta m}{c}\right)
\sum_{\alpha\bmod c}e\left(\frac{a\alpha^2+(q^{4+r}n-2\beta)\alpha}{c}\right).
$$
We have $c=d2^{\eta}$ with $d$ odd, and $\eta\geq 4$. The inner sum over $\alpha$ vanishes unless $n$ is even, say $n=2n'$, in which case it is given by (see \cite{Bl})
$$
\varepsilon_{d} \sqrt{c} \left(\frac{a}{d}\right)e\left(\frac{-\overline{a}(q^{4+r}n'-\beta)^2}{c}\right)\times
\begin{cases}
(1+i\chi_{-4}(ad)), &\text{if $\eta$ is even},\\
(\chi_8(a)+i\chi_{-4}(d)\chi_{-8}(a)), &\text{if $\eta$ is odd}.
\end{cases}
$$
The sum over $\beta$ now gives
$$
\sum_{\beta\bmod c}e\left(\frac{\overline{a}\beta^2+\beta m-\overline{a}(q^{4+r}n'-\beta)^2}{c}\right)=
e\left(\frac{-\overline{a}(q^{4+r}n')^2}{c}\right)
\sum_{\beta\bmod c}e\left(\frac{(m+\overline{a}q^{4+r}n)\beta}{c}\right).
$$
The last sum vanishes unless 
$$
m+\overline{a}q^{4+r}n\equiv 0 \bmod c,
$$
in which case the sum is given by $c$. The congruence condition forces the equality $(m,c)=(n,c)$. In particular $m=2m'$ is even. It follows that $B_r(n,m;c)$ is given by
$$
c^{3/2}\varepsilon_{d} \sideset{}{^\star}\sum_{\substack{a\bmod c\\m+\bar aq^{4+r}n\equiv 0\bmod c}}\left(\frac{a}{d}\right)e\left(\frac{-\bar a(q^{4+r}n')^2}{c}\right)\times\begin{cases}
\left(1+i\chi_{-4}(ad)\right) &\text{even $\eta$},\\
\left(\chi_8(a)+i\chi_{-4}(d)\chi_{-8}(a)\right) &\text{odd $\eta$}.
\end{cases}
$$
This leads us to consider the sum 
$$
\sideset{}{^\star}\sum_{\substack{a\bmod c\\m+\bar aq^{4+r}n\equiv 0\bmod c}}\left(\frac{a}{d}\right)\psi_{\pm}(a)e\left(\frac{-\bar a(q^{4+r}n')^2}{c}\right),
$$
which we want to express in terms of $C(n,m;c)$. The congruence condition does not uniquely determine $a$, but the product $\bar aq^{4+r}n'$ gets determined (in terms of $m'$) modulo $c'=2^{\eta-1}d$. The above character sum reduces to
$$
e\left(\frac{q^{4+r}n'm'}{c}\right)\left\{\;\sideset{}{^\star}\sum_{\substack{a\bmod c\\aq^{4+r}n'\equiv -m' \bmod c}}\left(\frac{a}{d}\right)\psi_{\pm}(a)+\varepsilon(n')
\sideset{}{^\star}\sum_{\substack{a\bmod c\\aq^{4+r}n'\equiv -m'+c' \bmod c}}\left(\frac{a}{d}\right)\psi_{\pm}(a)\right\},
$$
where $\varepsilon(n')=-1$ if $n'$ is odd and $\varepsilon(n')=1$ if $n'$ is even. Consequently, when $n'$ is even, i.e. $4|(n,m)$, the above sum reduces to 
$$
e\left(\frac{q^{4+r}n'm'}{c}\right)C_{\pm}(q^{4+r}n,-m;c).
$$
On the other hand if $n'$ is odd, i.e. $2\|n$ and $2\|m$, the sum vanishes for all $\eta\geq 4$. The lemma follows.
\end{proof}

\begin{corollary}
\label{cor-b-sum}
Let $c=2^{\eta}d$ with $d$ odd, $(q,d)=1$ and $\eta\geq 4$. Then $B_r(n,m;c)=0$ unless $4|(n,m)$, and $(m,c)=(n,c)$. Write $n=2n'$ and $m=2m'$. Then we have 
$$
B(\overline{q^3}n,\overline{q^3}m;c)=c^{\frac{3}{2}}\varepsilon_de\left(\frac{\overline{q^2}q^{r}n'm'}{c}\right)\left\{C_{+}(q^{4+r}n,-m;c)+i\chi_{-4}(d)C_{-}(q^{4+r}n,-m;c)\right\}.
$$\\
\end{corollary}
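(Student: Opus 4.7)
The plan is to derive this as an immediate consequence of Lemma~\ref{char2} by substituting $n\mapsto \overline{q^3}n$ and $m\mapsto \overline{q^3}m$, where $\overline{q^3}$ denotes any chosen integer representative of the inverse of $q^3$ modulo $c$. Since $q$ is an odd prime and $(q,d)=1$ we have $(q,c)=1$, so $\overline{q^3}$ is well defined; moreover $\overline{q^3}$ is odd, so multiplying $n$ or $m$ by this unit preserves both the divisibility condition $4\mid n$ and the gcd $(n,c)$. Consequently the vanishing clauses of Lemma~\ref{char2} translate without change into the hypotheses stated in the corollary.

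Granted $n=2n'$ and $m=2m'$, the ``halves'' of $\overline{q^3}n$ and $\overline{q^3}m$ are respectively $\overline{q^3}n'$ and $\overline{q^3}m'$. Plugging into the phase from Lemma~\ref{char2} yields the factor $e\bigl(q^{4+r}\,\overline{q^6}\,n'm'/c\bigr)$, and the elementary identity $q^{4+r}\,\overline{q^6}\equiv \overline{q^2}q^r\pmod{c}$ recovers the exponential displayed in the statement.

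The last point is to verify that the character sums are unchanged after the substitution. Recall that
$$
C_{\pm}(X,Y;c)=\sideset{}{^\star}\sum_{\substack{a\bmod c\\ aX\equiv Y\bmod c}}\left(\frac{a}{d}\right)\psi_{\pm}(a).
$$
Multiplying both $X$ and $Y$ by the unit $q^3\bmod c$ leaves the congruence condition $aX\equiv Y\pmod{c}$, and therefore the entire sum, unchanged. Applied with $X=q^{4+r}\overline{q^3}n$ and $Y=-\overline{q^3}m$, this identifies $C_{\pm}(q^{4+r}\overline{q^3}n,-\overline{q^3}m;c)$ with $C_{\pm}(q^{4+r}n,-m;c)$, completing the derivation.

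There is no genuine obstacle here: the corollary is a book-keeping exercise that uses only the coprimality of $q$ with $c$ together with the definition of $C_{\pm}$. The one subtlety worth flagging is the interpretation of ``$n/2$'' after substituting $n\mapsto \overline{q^3}n$, but this is harmless because $\overline{q^3}$ is odd, so the halves remain integers and the phase $n'm'/c$ is unambiguously defined modulo $1$.
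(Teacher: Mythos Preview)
Your derivation is correct and matches the paper's intent: the corollary is stated without proof precisely because it follows from Lemma~\ref{char2} by the substitution $n\mapsto\overline{q^3}n$, $m\mapsto\overline{q^3}m$ that you carry out. Your handling of the phase via $q^{4+r}\overline{q^6}\equiv\overline{q^2}q^r\pmod c$ and the invariance of $C_{\pm}$ under simultaneous unit scaling of both arguments are exactly the two observations needed.
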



\section{The integral $I_r\left(n,m;c\right)$}
\label{integral-00} 

Now we will study the integral 
\begin{align}
\label{int-again}
I_r\left(n,m;c\right)=\mathop{\iint}_{\mathbb R^2}h\left(x\right)h\left(y\right)e\left(\frac{4xyN^2}{q^{4+r}c}\right)W_k\left(\frac{2xyN^2}{q^{4+r}c}\right)e\left(\frac{-(nx+my)N}{q^3c}\right)dxdy.
\end{align}
which appears in Lemma \ref{poisson}. We will now obtain bounds for this integral using repeated integration by parts. Recall that $cq^r\sim C$ and $C=\frac{N^2}{q^4B}$. So we will write $c=Cz/q^r$ for some $z\in [1,2]$. Differentiating the first four factors and integrating the last factor we get
$$
I_r\left(n,m;\frac{Cz}{q^r}\right)\ll_{j_1,j_2} \left(1+\frac{N^2}{q^4C}\right)^{j_1+j_2}\left(\frac{q^3C}{|n|Nq^r}\right)^{j_1}\left(\frac{q^3C}{|m|Nq^r}\right)^{j_2} \;\;\;\;\text{for}\;\;j_1, j_2\geq 0.
$$
So it follows that the above integral is negligibly small (i.e. $O(q^{-L})$ for any $L>0$) unless 
\begin{align}
\label{nm-bound}
|n|, |m| \ll \frac{N}{q^{1+r}}\left(B^{-1}+1\right)q^{\varepsilon}.
\end{align}\\

For $B>q^{\varepsilon}$ there is oscillation in the third factor on the right hand side of \eqref{int-again}. In this case we may also do repeated integration by parts by integrating the third factor and differentiating the other factors. This process yields the bound
$$
I_{r}\left(n,m;\frac{Cz}{q^r}\right)\ll_{j_1,j_2} \left(1+\frac{|n|Nq^r}{q^3C}\right)^{j_1}\left(1+\frac{|m|Nq^r}{q^3C}\right)^{j_2}\left(\frac{q^4C}{N^2}\right)^{j_1+j_2}\;\;\;\;\text{for}\;\;j_1, j_2\geq 0.
$$
Since we are assuming that $C=\frac{N^2}{q^4B}< \frac{N^2}{q^{4+\varepsilon}}$, it follows that the integral is arbitrarily small unless $|n|, |m| \gg \frac{N}{q^{1+r}}q^{-\varepsilon}$. 
\begin{lemma}
\label{int-000}
Suppose $C<\frac{N^2}{q^{4+\varepsilon}}$ (or in other words $B>q^{\varepsilon}$) then the integral $I_{r}\left(n,m;\tfrac{Cz}{q^r}\right)$ is negligibly small unless 
$$
|n|, |m| \in \left[\frac{N}{q^{1+r}}q^{-\varepsilon},\frac{N}{q^{1+r}}q^{\varepsilon}\right].
$$\\
\end{lemma}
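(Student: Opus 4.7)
The plan is to apply integration by parts in two different ways to the double integral \eqref{int-again}, producing the two bounds already displayed in the paragraphs immediately preceding the statement. The first yields an upper cutoff on $|n|,|m|$; the second, available precisely under the assumption $B>q^{\varepsilon}$, yields a matching lower cutoff. Combining them gives the stated interval.

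For the \emph{upper cutoff}, I would integrate $e(-(nx+my)N/(q^3c))$ repeatedly in $x$ (and in $y$) while differentiating the remaining three factors $h(x)h(y)$, $e(4xyN^2/(q^{4+r}c))$, $W_k(2xyN^2/(q^{4+r}c))$. Each integration contributes $\asymp q^3C/(|n|Nq^r)$; each derivative of the second exponential contributes $\asymp N^2/(q^{4+r}c)=B$; derivatives of $h$ are $O(1)$; and by \eqref{bessel-bd0} combined with the chain rule, the $j$-th $x$-derivative of $W_k(2xyN^2/(q^{4+r}c))$ is $\ll_{j,k}\max\{B^{k-1},B^{-1/2}\}$, a factor independent of $j_1+j_2$ that is absorbed into the implied constant. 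After $j_1$ integrations in $x$ and $j_2$ in $y$, using $q^3C/(Nq^r)=N/(q^{1+r}B)$, each $n$-factor becomes $N(1+B^{-1})/(|n|q^{1+r})$, which under $B>q^{\varepsilon}$ is $\leq q^{-\varepsilon}$ whenever $|n|\geq Nq^{\varepsilon}/q^{1+r}$. Letting $j_1,j_2$ grow shows the integral is negligibly small unless $|n|,|m|\leq Nq^{\varepsilon}/q^{1+r}$.

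For the \emph{lower cutoff}, the hypothesis $B>q^{\varepsilon}$ makes the phase of $e(4xyN^2/(q^{4+r}c))$ genuinely oscillatory (derivative $\asymp B\gg q^{\varepsilon}$), so I would integrate this factor instead and differentiate the other three. Each integration costs $\asymp q^{4+r}c/N^2=B^{-1}$; each derivative of $e(-nxN/(q^3c))$ costs $\asymp|n|Bq^{1+r}/N$; derivatives of $h$ and $W_k$ again contribute a $k$-dependent constant. The resulting bound is
$$I_r(n,m;Cz/q^r)\ll_{j_1,j_2,k}\Bigl(1+\frac{|n|Bq^{1+r}}{N}\Bigr)^{j_1}\Bigl(1+\frac{|m|Bq^{1+r}}{N}\Bigr)^{j_2}B^{-(j_1+j_2)}.$$
If $|n|\leq Nq^{-\varepsilon}/q^{1+r}$, then $|n|Bq^{1+r}/N\leq Bq^{-\varepsilon}$, so the $n$-factor is $\leq(2Bq^{-\varepsilon})^{j_1}B^{-j_1}=(2q^{-\varepsilon})^{j_1}$, negligible as $j_1\to\infty$ for $q$ large; similarly for $|m|$. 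Hence the integral is negligibly small unless $|n|,|m|\geq Nq^{-\varepsilon}/q^{1+r}$, and combining both cutoffs proves the lemma. The only point requiring care is the book-keeping of $W_k$-derivatives, which \eqref{bessel-bd0} keeps bounded in $j_1+j_2$; otherwise no serious obstacle arises.
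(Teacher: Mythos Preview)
Your proposal is correct and follows exactly the paper's own argument: the two displayed bounds immediately preceding the lemma are obtained by integrating by parts against, respectively, the factor $e(-(nx+my)N/(q^3c))$ and the oscillatory factor $e(4xyN^2/(q^{4+r}c))$, and you have reproduced both computations accurately, including the reduction of the resulting factors to $N(1+B^{-1})/(|n|q^{1+r})$ and $(1+|n|Bq^{1+r}/N)B^{-1}$. Your handling of the $W_k$-derivatives via \eqref{bessel-bd0} (showing they contribute a $j$-independent factor $\max\{B^{k-1},B^{-1/2}\}$) is the only point the paper leaves implicit, and you have addressed it correctly.
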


We will now show that the bound that we have obtained so far is satisfactory for $r\geq 2$ if the weight $k>2$. Indeed using the bound from \eqref{bessel-bd0} we get
$$
I_r\left(n,m;\frac{Cz}{q^r}\right)\ll \min\{B^{-\frac{1}{2}}, B^{k-1}\}.
$$ 
Also, from our computations in the previous section it follows that
$$
D_r(n,m;c)\ll (n,m,c)q^3c^{\frac{3}{2}}.
$$
Hence (as we are momentarily assuming $k>2$)
\begin{align*}
\mathcal T_{r,C}\ll &\frac{N^2}{q^{8+\frac{r}{2}}}\sum_{c\sim C/q^r}\frac{1}{c^3}\mathop{\sum\sum}_{1\leq |n|,|m| < Nq^{\varepsilon}(1+B^{-1})/q^{1+r}}|D_r(n,m;c)I_r(n,m;c)|+q^{-2010}\\
\ll &\frac{N^4q^{\varepsilon}}{q^{7+\frac{5r}{2}}}\left(1+\frac{1}{B^2}\right)\min\{B^{-\frac{1}{2}}, B^2\}\sum_{c\sim C/q^r}\frac{1}{c^{\frac{3}{2}}}+q^{-2010}\ll \frac{N^3q^{\varepsilon}}{q^{5+\frac{5r}{2}}}.
\end{align*}
\begin{lemma}
\label{med-r}
For $r\geq 2$, we have
$$
\mathcal T_{r,C}\ll Nq^{\varepsilon}.
$$
\end{lemma}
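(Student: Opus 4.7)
The plan is to combine the ingredients already at hand and verify that they suffice once $r\ge 2$. The three inputs are: the character-sum bound $|D_r(n,m;c)|\ll (n,m,c)\,q^3 c^{3/2}$, which follows from the factorization \eqref{char-sum-d-r} together with Lemma \ref{char1} and Corollary \ref{cor-b-sum}; the oscillation-free bound $|I_r(n,m;c)|\ll \min\{B^{-1/2},B^{k-1}\}$ supplied by \eqref{bessel-bd0}; and the effective localization $|n|,|m|\ll Nq^{\varepsilon}(1+B^{-1})/q^{1+r}$ furnished by repeated integration by parts on \eqref{int-again}.

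The first step is to insert these bounds into the definition \eqref{tr} together with the Poisson representation of $T_r(c)$ from Lemma \ref{poisson}. The only nontrivial point is handling the gcd factor $(n,m,c)$ in the character-sum bound, which I would absorb via the elementary estimate $\sum_{|n|,|m|\ll X}(n,m,c)\ll X^{2}c^{\varepsilon}$. After collecting powers of $q$ and $N$ this reduces the task to estimating
$$
\mathcal T_{r,C}\ll \frac{N^{4}q^{\varepsilon}}{q^{7+5r/2}}\bigl(1+B^{-2}\bigr)\min\{B^{-1/2},B^{k-1}\}\sum_{c\sim C/q^{r}}\frac{1}{c^{3/2}}.
$$

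The second step is a simple case analysis around $B=1$ showing that $(1+B^{-2})\min\{B^{-1/2},B^{k-1}\}$ is $O(1)$ uniformly in $B>0$ whenever $k\ge 3$, combined with the tail bound $\sum_{c\sim C/q^{r}}c^{-3/2}\ll (C/q^{r})^{-1/2}=q^{(4+r)/2}B^{1/2}/N$. Together these yield
$$
\mathcal T_{r,C}\ll \frac{N^{3}q^{\varepsilon}}{q^{5+5r/2}}.
$$

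Finally, the hypothesis $N\le q^{9/2+\varepsilon}$ from Theorem \ref{thm2} gives $N^{2}/q^{5+5r/2}\le q^{4-5r/2+O(\varepsilon)}$, which is at most $q^{-1+O(\varepsilon)}$ as soon as $r\ge 2$. Multiplying through by $N$ gives the desired bound $\mathcal T_{r,C}\ll Nq^{\varepsilon}$. There is no substantive obstacle here: the lemma asserts precisely that the rather wasteful a priori bounds from Sections \ref{sec-char} and \ref{integral-00}, augmented by the extra $q^{r}$ savings coming from the $q$-adic factorization of the modulus, already suffice in this range. No cancellation beyond what is already built into the character-sum and Bessel bounds is exploited; the delicate harmonic analysis is reserved for the critical ranges $r=0,1$, which must be treated by a different argument in subsequent sections.
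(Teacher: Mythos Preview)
Your argument is exactly the paper's own: bound $D_r$ via Lemma~\ref{char1} and Corollary~\ref{cor-b-sum}, bound $I_r$ via \eqref{bessel-bd0}, localize $n,m$ by \eqref{nm-bound}, and sum trivially over $c$. There is one bookkeeping slip. The two estimates you combine in the second step leave an uncontrolled factor $B^{1/2}$ coming from the $c$-sum, since you only asserted $(1+B^{-2})\min\{B^{-1/2},B^{k-1}\}=O(1)$. For $B\gg 1$ this is not enough: with $N\asymp q^{9/2}$ and $r=2$ one may have $B$ as large as $N^{2}/q^{6}\asymp q^{3}$, and then your displayed bound picks up a spurious $q^{3/2}$. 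The correct observation---and this is what the paper uses implicitly---is that the \emph{full} product
\[
\bigl(1+B^{-2}\bigr)\min\{B^{-1/2},B^{k-1}\}\cdot B^{1/2}
\]
is $O(1)$ uniformly in $B>0$ once $k\ge 3$, because for $B\ge 1$ the minimum equals $B^{-1/2}$ and cancels the extra $B^{1/2}$, while for $B\le 1$ the product is $B^{k-5/2}\le 1$. With this correction the computation gives $\mathcal T_{r,C}\ll N^{3}q^{\varepsilon}/q^{5+2r}$ (the exponent $5+\tfrac{5r}{2}$ in the paper, which you reproduced, appears to be a harmless slip; either exponent yields $\ll Nq^{\varepsilon}$ for $r\ge 2$ under the standing hypothesis $N\le q^{9/2+\varepsilon}$).
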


\begin{remark}
We will not use this lemma. The ultimate bound that we obtain in the following sections, works equally well for all values of $r$. Also the restriction on the weight $k>2$ is not necessary for our final bound.
\end{remark}

\section{Reciprocity and Poisson summation - II}
\label{sec-poi}

In Section \ref{sec-char}, we explicitly computed the character sum which appears in Lemma \ref{poisson}. Substituting this explicit form of the character sum in \eqref{tr}, and ignoring the negligible contribution that comes from the large values of $c$ and the large values of the gcd $(n,m)$ (as we have noted after Lemma \ref{poisson}), we are basically left with the job of analysing sums of the type 
\begin{align}
\label{tr'}
\mathcal T_{r,C}^\star=\frac{N^2}{q^{5+\frac{r}{2}}}\sum_{\substack{\delta\leq Q\\q\nmid \delta}}
\sum_{\substack{c_1\leq Q\\c_1|(2\delta)^{\infty}}}(\delta_2^2,c_1)\mathop{\sum\sum}_{\substack{1\leq u,v\ll Q\\u,v|(2\delta)^{\infty}\\(u,v)=1}}\Bigl|T_{r,C}^\star(\delta,c_1,u,v)\Bigr|
\end{align}
where $\delta=\delta_1\delta_2^2$ with $\delta_1$ square-free,
\begin{align}
\label{tr''}
T_{r,C}^\star(\delta,c_1,u,v)=\mathop{\sum\sum}_{\substack{n,m=1\\(n,m)=1\\(nm,\delta q)=1\\ n,m\equiv 1 \bmod 4}}^{\infty}
\overline{\chi(n)}\chi(m)\left(\frac{nm}{q^r}\right)S_{r,c_1}^\star(\delta un,\delta vm),
\end{align}
and 
\begin{align}
\label{S_r}
S_{r,c_1}^\star(n,m)=\sum_{\substack{c_2=1\\(c_2,2qnm)=1}}^{\infty}\left(\frac{nmq^r}{c_2}\right)e\left(\frac{\overline{q^2}q^{r}nm}{c_1c_2}\right)\frac{I_{r}(2n,2m;c_1c_2)}{(c_1c_2)^{3/2}}G\left(\frac{c_1c_2q^r}{C}\right).
\end{align}
(Here $Q=q^{2010}$.) The condition that $q\nmid \delta$ in \eqref{tr'} is justified by the fact that $a_n=0$ (see \eqref{an-def}) whenever $q|n$. (This also implies that there is no zero frequency $nm=0$ contribution to worry about.) The assumption on $n$ and $m$, that they are $\equiv 1 \bmod 4$, is made to simplify some of the standard complications related to the prime $2$. In general we may take out the $2$-primary part from $n$ or $m$, and then split the sum into four parts depending on the possible congruence classes modulo $4$. Then for each sum we follow the same steps that we take below. Also note that for notational simplicity we are just focusing on the contribution from the nonnegative $n$ and $m$. \\

Our next step is an application of the Poisson summation formula on the sum over $c_2$ in \eqref{S_r}. To this end we have to first apply reciprocity
$$
e\left(\frac{\overline{q^2}q^{r}nm}{c_1c_2}\right)=
e\left(\frac{-\overline{c_1c_2}q^{r}nm}{q^2}\right)
e\left(\frac{q^{r}nm}{q^2c_1c_2}\right).
$$
We will include the last factor in our smooth function. Set
$$
\mathcal I_{r}(n,m;c)=e\left(\frac{q^{r}nm}{q^2c}\right)\frac{I_{r}(2n,2m;c)}{c^{3/2}}G\left(\frac{cq^r}{C}\right).
$$ 
Now consider the sum 
\begin{align}
\label{S_r*}
S_{r,c_1}^\star(n,m)=\sum_{\substack{c_2\in\mathbb Z\\(c_2,2qnm)=1}}\left(\frac{c_2}{nmq^r}\right)e\left(\frac{-\overline{c_1c_2}q^{r}nm}{q^2}\right)\mathcal I_{r}(n,m;c_1c_2).
\end{align}
Here we have applied quadratic reciprocity. This is one of the places where we use the assumption that $n,m\equiv 1 \bmod 4$. We will use this yet another time in the evaluation of the character sum that appears in our next result.
\begin{lemma}
\label{poisson2}
We have
$$
S_{r,c_1}^\star(n,m)=\frac{q^{\frac{r}{2}}}{2c_1q^{2}\sqrt{C}[n,m]}\sum_{c_2\in\mathbb Z}E_{r,c_1}(c_2;n,m)\tilde{\mathcal I}_{r,c_1}(c_2;n,m)
$$
where the character sum is given by 
$$
E_{r,c_1}(c_2;n,m)=\sideset{}{^\star}\sum_{\beta \bmod 2q^2[n,m]}\left(\frac{\beta }{nmq^r}\right)e\left(\frac{-\overline{c_1\beta}q^{r}nm}{q^2}\right)e\left(\frac{\beta c_2}{2q^2[n,m]}\right),
$$
and
\begin{align*}
\tilde{\mathcal I}_{r,c_1}(c_2;n,m)=\int G(z)I_{r}\left(2n,2m;\frac{Cz}{q^r}\right)
e\left(\frac{q^{2r}nm}{q^2Cz}-\frac{c_2Cz}{2q^{2+r}c_1[n,m]}\right)\frac{dz}{z^{\frac{3}{2}}}.
\end{align*}
\end{lemma}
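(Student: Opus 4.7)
The plan is to apply Poisson summation to the $c_2$-sum in \eqref{S_r*} with modulus $M:=2q^2[n,m]$. First I would check that the arithmetic part of the summand depends on $c_2$ only modulo $M$. The Kronecker symbol factors as $\left(\frac{c_2}{nmq^r}\right)=\left(\frac{c_2}{nm}\right)\left(\frac{c_2}{q}\right)^r$; since $\left(\frac{c_2}{(n,m)^2}\right)=1$ on units, the first factor has period $[n,m]/(n,m)$, so the combined period divides $q[n,m]$. The exponential $e(-\overline{c_1c_2}q^rnm/q^2)$ is trivial for $r\geq 2$ and has period $q^2$ otherwise. Including the at most factor of $2$ from the Kronecker extension to $c_2\in\mathbb Z$, a direct check for $r\in\{0,1,2,3,4\}$ confirms that the total period divides $M$; this is where the assumptions $n,m\equiv 1\bmod 4$ from \eqref{tr''} are needed, together with the quadratic reciprocity already applied to pass from \eqref{S_r} to \eqref{S_r*}.

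Splitting $c_2=\beta+M\ell$ and applying Poisson in $\ell$ yields
\[
S_{r,c_1}^\star(n,m)=\frac{1}{M}\sum_{c_2\in\mathbb Z}\Bigl[\sideset{}{^\star}\sum_{\beta\bmod M}\left(\frac{\beta}{nmq^r}\right)e\!\left(\frac{-\overline{c_1\beta}q^rnm}{q^2}\right)e\!\left(\frac{c_2\beta}{M}\right)\Bigr]\int_{\mathbb R}\mathcal I_r(n,m;c_1x)\,e\!\left(-\frac{c_2x}{M}\right)dx,
\]
and the bracketed character sum is exactly $E_{r,c_1}(c_2;n,m)$. For the integral I would substitute $x=Cz/(c_1q^r)$, so that $c_1x\mapsto Cz/q^r$ and $G(c_1xq^r/C)\mapsto G(z)$; this converts $q^rnm/(q^2c_1x)$ into $q^{2r}nm/(q^2Cz)$ and $-c_2x/M$ into $-c_2Cz/(2c_1q^{r+2}[n,m])$, while the Jacobian $dx=Cdz/(c_1q^r)$ combined with $(c_1x)^{-3/2}=(Cz/q^r)^{-3/2}$ produces the prefactor $q^{r/2}/(c_1\sqrt{C})$ times $\tilde{\mathcal I}_{r,c_1}(c_2;n,m)$. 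Multiplying by $1/M=1/(2q^2[n,m])$ gives the asserted constant $q^{r/2}/(2c_1q^2\sqrt{C}[n,m])$.

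The only delicate step I foresee is the initial periodicity bookkeeping: verifying uniformly in $r\in\{0,\ldots,4\}$ that the Kronecker symbol and the exponential combine to period dividing $2q^2[n,m]$, with the factor of $2$ (rather than some higher power) being exactly right. Once that is in place, the rest of the proof reduces to a standard application of Poisson summation and a linear change of variables in the resulting Fourier integral.
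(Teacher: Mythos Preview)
Your proposal is correct and follows essentially the same route as the paper's proof: split the $c_2$-sum in \eqref{S_r*} into residue classes modulo $M=2q^2[n,m]$, apply Poisson summation to the resulting inner sum, and then perform the linear change of variable $c_1x=Cz/q^r$ in the Fourier integral to extract the prefactor $q^{r/2}/(c_1\sqrt{C})$ and the weight $\tilde{\mathcal I}_{r,c_1}$. The paper's version is terser---it simply asserts the periodicity modulo $M$ and says ``after a change of variable the integral reduces to''---whereas you spell out both the periodicity bookkeeping and the Jacobian computation, but the argument is the same.
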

\begin{proof}
Breaking the sum in \eqref{S_r*} into congruence classes modulo $2q^2[n,m]$ (here $[.,.]$ denotes lcm) we get
$$
\sideset{}{^\star}\sum_{\beta \bmod 2q^2[n,m]}\left(\frac{\beta }{nmq^r}\right)e\left(\frac{-\overline{c_1\beta}q^{r}nm}{q^2}\right)\sum_{c_2\in\mathbb Z}\mathcal I_{r,j}(n,m;c_1(\beta+c_22q^2[n,m])).
$$
We apply the Poisson summation formula to the inner sum to replace it with 
$$
\sum_{c_2\in\mathbb Z}\int\mathcal I_{r,j}(n,m;c_1(\beta+2zq^2[n,m]))e(-zc_2)dz.
$$
After a change of variable the integral reduces to
\begin{align*}
\frac{q^{\frac{r}{2}}}{2c_1q^{2}\sqrt{C}[n,m]}e\left(\frac{\beta c_2}{2q^2[n,m]}\right)\tilde{\mathcal I}_{r,j,c_1}(c_2;n,m).
\end{align*}
The lemma now follows by rearranging the sums.
\end{proof}


\section{The character sum $E_{r,c_1}(c_2;n,m)$}
\label{sec-char2}

Next we will explicitly evaluate the character sum which appears in Lemma \ref{poisson2}. For \eqref{tr'}, we only require to consider the character sum $E_{r,c_1}(c_2;u\delta n,v\delta m)$ where $uv|(2\delta)^{\infty}$, $(u,v)=(n,m)=1$, $n,m\equiv 1\bmod{4}$ and $(nm,\delta q)=1$. In this case the sum splits into a product given by 
\begin{align}
\label{split-sum}
\left(\frac{\delta}{nm}\right)\left[\:\sideset{}{^\star}\sum_{\beta \bmod 2\delta uv}\left(\frac{\beta }{uv}\right)e\left(\frac{\beta c_2}{2\delta uv}\right)\right]&\left[\:\sideset{}{^\star}\sum_{\beta \bmod nm}\left(\frac{\beta}{nm}\right)e\left(\frac{\beta c_2}{nm}\right)\right]\\
\nonumber &\times\left[\:\sideset{}{^\star}\sum_{\beta \bmod q^2}\left(\frac{\beta }{q^r}\right)e\left(\frac{-\overline{c_1\beta}q^{r}\delta\ell+\overline{2\ell}\beta c_2}{q^2}\right)\right],
\end{align}
where $\ell=[\delta un,\delta vm]=\delta uvnm$. The first two sums are just Gauss sums and the last sum is a generalized Kloosterman sum. We denote the first sum by $g_{\delta,u,v}^\star(c_2)$, and note the following bound:
\begin{lemma}
\label{bound-for-g}
Let $\delta=\delta_1\delta_2^2$ and $c_2=c_{21}c_{22}^2$ with $\delta_1$, $c_{21}$ square-free. Then we have 
$$
g_{\delta,u,v}^\star(c_2)\ll uv\delta_2(\delta_1,c_{21})(\delta_1\delta_2,c_{22}).
$$
\end{lemma}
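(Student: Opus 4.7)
The sum $g^\star_{\delta,u,v}(c_2)$ is a generalised Gauss sum, and the natural strategy is to exploit its multiplicative structure via the Chinese Remainder Theorem. I would write $g^\star_{\delta,u,v}(c_2) = \prod_{p \mid 2\delta uv} g_p$, where $g_p$ is the local factor at $p$. Since $(u,v) = 1$ and $uv \mid (2\delta)^\infty$, each prime $p$ dividing $uv$ divides $2\delta$ and divides exactly one of $u$, $v$; writing $p^{a_1} \| uv$ and $p^{a_2} \| \delta$, the local modulus at $p$ is $p^{a_1+a_2}$ (twice that at $p=2$), and the local character is $\left(\frac{\cdot}{p}\right)^{a_1}$, which is trivial when $a_1$ is even and primitive of conductor $p$ when $a_1$ is odd. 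For primes $p \mid 2\delta$ with $p \nmid uv$, the local character is trivial and $g_p$ reduces to an ordinary Ramanujan sum.

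At each prime I would then evaluate $g_p$ by splitting $\beta = \beta_0 + p\beta_1$, with $\beta_0$ running over residues modulo $p$ (modulo $8$ at $p=2$) and $\beta_1$ over the remaining digits. The $\beta_1$-sum collapses by orthogonality, producing a factor of $p^{a_1+a_2-1}$ together with a divisibility condition on $c_2$, while the residual $\beta_0$-sum is a primitive quadratic Gauss sum of magnitude $\sqrt{p}$ when $a_1$ is odd, and a unit root-of-unity sum when $a_1$ is even. In all cases the local bound takes the shape $p^{a_1 + a_2 - \mathrm{savings}}$, where the savings depend on the parities of $a_2$ and $v_p(c_2)$.

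Finally, I would collect the local bounds against the factorisations $\delta = \delta_1\delta_2^2$ and $c_2 = c_{21}c_{22}^2$: the $p^{a_1}$ contributions assemble to $uv$; the $p^{\lfloor a_2/2 \rfloor}$ contributions assemble to $\delta_2$; and the remaining local factors, tracked through the parities of $a_2$ (which determines whether $p \mid \delta_1$) and of $v_p(c_2)$ (which determines whether $p \mid c_{21}$) at each prime, aggregate to $(\delta_1, c_{21})(\delta_1\delta_2, c_{22})$. The main technical annoyance I expect is the prime $p = 2$, since the additional factor of $2$ in the modulus interacts with the three quadratic Dirichlet characters $\chi_{-4}$, $\chi_8$, $\chi_{-8}$ of $2$-power conductor that already complicated the analysis in Section~\ref{sec-char}. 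These generate a handful of subcases, but only worsen the bound by an absolute constant, so the claimed estimate follows.
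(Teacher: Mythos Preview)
Your proposal is correct and follows essentially the same route as the paper: reduce via multiplicativity to a single prime $p$, recognise the local factor as a Gauss sum (when the exponent of $p$ in $uv$ is odd) or a Ramanujan sum (when it is even or $p\nmid uv$), and then check case by case that the resulting power of $p$ is dominated by the $p$-part of $uv\,\delta_2(\delta_1,c_{21})(\delta_1\delta_2,c_{22})$. In fact the paper's own proof is only a sketch at this level (``we prefer to omit the details''), so your outline is already at least as explicit as what appears there.
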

\begin{proof}
First using multiplicativity we reduce to the case where $\delta$, $u$ and $v$ are powers of a given prime $p$. Then we use well-known bounds for the Gauss sums and Ramanujan sums. Finally we need to verify that the power of the prime $p$ which appears on the right hand side is sufficiently large. For this we need to consider several cases. We prefer to omit the details. 
\end{proof}

The middle sum in \eqref{split-sum} is given by $g(n,c_2)g(m,c_2)$ where $g(n,c)$ stands for the usual Gauss sum 
$$
g(n,c)=\sideset{}{^\star}\sum_{\beta \bmod n}\left(\frac{\beta }{n}\right)e\left(\frac{\beta c}{n}\right).
$$
The other character sum modulo $q^2$, after a change of variable is given by 
$$
\left(\frac{\delta uvnm}{q^r}\right)\sideset{}{^\star}\sum_{\beta \bmod q^2}\left(\frac{\beta }{q^r}\right)e\left(\frac{-\overline{c_1\beta}q^{r}\delta+\overline{2}\beta c_2}{q^2}\right)=\left(\frac{\delta uvnm}{q^r}\right)S_r(\bar 2c_2,-\overline{c_1}q^r\delta;q^2).
$$
The Kloosterman type sum $S_r(\bar 2c_2,-\overline{c_1}q^r\delta;q^2)$ is free of $n$, $m$, and it is bounded above by $4(c_2,q^r,q^2)q$, which is the Weil bound. 
\begin{lemma}
\label{char31}
Let $u,v,n,m$ be as in \eqref{tr'}. Then
$$
E_{r,c_1}(c_2;u\delta n,v\delta m)=g_{\delta,u,v}^\star(c_2)g(n,c_2)g(m,c_2)\left(\frac{\delta uvnm}{q^r}\right)\left(\frac{\delta}{nm}\right)S_r(\bar 2c_2,-\overline{c_1}q^r\delta;q^2).
$$
\end{lemma}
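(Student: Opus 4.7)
The strategy is a Chinese Remainder Theorem factorization of the modulus $2q^2[u\delta n, v\delta m] = 2q^2\delta uv nm$ into the three pairwise coprime parts $2\delta uv$, $nm$, and $q^2$. The coprimality of these three blocks follows from the standing hypotheses: $q$ is an odd prime with $q\nmid\delta$ and $uv\mid (2\delta)^\infty$, forcing $(q,uv)=1$; $(nm,\delta q)=1$ is given; and $n,m\equiv 1\bmod 4$ gives $(nm,2)=1$. Since $(\beta,\delta)=1$ throughout and $\delta^2$ is a square, the Kronecker symbol in the summand factors cleanly as
$$
\left(\frac{\beta}{u\delta n\cdot v\delta m\cdot q^r}\right)=\left(\frac{\beta}{uv}\right)\left(\frac{\beta}{nm}\right)\left(\frac{\beta}{q^r}\right).
$$

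I would then apply CRT to $\beta$ and to both exponential factors. Writing $\beta\leftrightarrow(\beta_1,\beta_2,\beta_3)$ modulo $(2\delta uv,nm,q^2)$, the first exponential $e(-\overline{c_1\beta}q^r\delta^2 uvnm/q^2)$ depends only on $\beta_3$ (since $q\nmid \delta uvnm$), and the second exponential $e(\beta c_2/2q^2\ell)$ with $\ell=\delta uvnm$ splits into three independent pieces via the standard CRT identity for $1/N_1N_2N_3$. For the $q^2$ piece, changing variable $\beta_3\mapsto \ell\beta_3$ extracts $\left(\frac{\delta uvnm}{q^r}\right)$ and condenses the combined exponent to $(-\overline{c_1\beta_3}q^r\delta+\overline{2}\beta_3 c_2)/q^2$, yielding $S_r(\overline{2}c_2,-\overline{c_1}q^r\delta;q^2)$. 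For the $nm$ piece, absorbing its CRT inverse produces $\sum_{\beta_2\bmod nm}\left(\frac{\beta_2}{nm}\right)e(\beta_2 c_2/nm)$, which factors as $g(n,c_2)g(m,c_2)$ via $(n,m)=1$ and a further CRT. For the $2\delta uv$ piece, absorbing its CRT inverse into $\beta_1$ leaves exactly $g^\star_{\delta,u,v}(c_2)$.

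The delicate step is to verify that the residual Kronecker symbols accumulated from the three changes of variable collapse to the single factor $\left(\frac{\delta}{nm}\right)$ in the statement. Each change of variable contributes a Kronecker symbol from shifting the numerator; the accounting leverages $n,m\equiv 1\bmod 4$ (so $nm\equiv 1\bmod 4$) and $uv\mid (2\delta)^\infty$. Quadratic reciprocity between any odd divisor $d$ of $uv$ and $nm$ loses its sign thanks to $nm\equiv 1\bmod 4$, giving $\left(\frac{d}{nm}\right)\left(\frac{nm}{d}\right)=1$; the $2$-adic Kronecker symbols collapse via the supplementary identity $\left(\frac{2}{nm}\right)=\left(\frac{nm}{2}\right)$ and the fact that the even part of $uv$ is absorbed inside the $2\delta uv$ block. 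This symbol-level bookkeeping, rather than any analytic input, is the main obstacle of the proof; everything else is a mechanical application of CRT, multiplicativity of the Kronecker symbol in the numerator, and linear changes of variable in finite additive groups.
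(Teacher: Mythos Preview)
Your proposal is correct and follows essentially the same route as the paper: the paper's proof of this lemma is precisely the CRT factorization displayed in \eqref{split-sum}, splitting the modulus $2q^2\delta uvnm$ into the three coprime blocks $2\delta uv$, $nm$, $q^2$, followed by the change of variable $\beta\mapsto\ell\beta$ in the $q^2$ block to extract $\left(\frac{\delta uvnm}{q^r}\right)$ and produce the twisted Kloosterman sum $S_r$. The paper simply asserts the split \eqref{split-sum} with the residual factor $\left(\frac{\delta}{nm}\right)$ already in place, whereas you spell out the symbol-tracking via quadratic reciprocity and the supplementary law for the prime $2$ (using $nm\equiv 1\bmod 4$) that justifies it; this is exactly the ``standard complications related to the prime $2$'' the paper alludes to just after \eqref{S_r}.
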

 
\vspace{.5cm}

Substituting the explicit value of the character sum in Lemma \ref{poisson2}, and interchanging the order of summations, we get 
\begin{align}
\label{trr}
T_{r,C}^\star(\delta,c_1,u,v)=&\frac{1}{2q^{2-\frac{r}{2}}\sqrt{C}}\frac{1}{c_1\delta uv}\left(\frac{\delta uv}{q^r}\right)\sum_{c_2\in\mathbb Z}g_{\delta,u,v}^\star(c_2)S_r(\bar 2c_2,-\overline{c_1}q^r\delta;q^2)
\\
\nonumber &\times \mathop{\sum\sum}_{\substack{n,m=1\\(n,m)=1\\(nm,\delta q)=1\\ n,m\equiv 1 \bmod 4}}^{\infty}b_{\delta,\bar\chi}(n,c_2)b_{\delta,\chi}(m,c_2)\frac{\tilde{\mathcal I}_{r,c_1}(c_2;\delta un,\delta vm)}{nm}
\end{align}
where the new coefficients are given by
$$
b_{\delta,\psi}(n,c_2)=\psi(n)\left(\frac{\delta}{n}\right)g(n,c_2).
$$


\section{The integral $\tilde{\mathcal I}_{r,c_1}(c_2;n,m)$}
\label{integral}

In this section we will analyse the integral 
\begin{align}
\label{last-int-again}
\tilde{\mathcal I}_{r,c_1}(c_2;n,m)=\int G(z)I_{r}\left(2n,2m;\frac{Cz}{q^r}\right)
e\left(\frac{q^{2r}nm}{q^2Cz}-\frac{c_2Cz}{2q^{2+r}c_1[n,m]}\right)\frac{dz}{z^{\frac{3}{2}}}
\end{align} 
which appears in \eqref{trr} and is defined in Lemma~\ref{poisson2}. We will obtain bounds for this integral, which in particular will also give us the effective range for the $c_2$ sum in \eqref{trr}. Also we need to separate the variables $n$ and $m$ to pave the way for an application of the large sieve. Recall that we are taking $n,m>0$. \\

Let us temporarily write $\delta n$ and $\delta m$ in place of $n$ and $m$ respectively, and put the restriction $(n,m)=1$. We replace the integral representation \eqref{int-again} to obtain 
\begin{align*}
\tilde{\mathcal I}_{r,c_1}(c_2;\delta n,\delta m)=&\iiint h\left(x\right)h\left(y\right)G(z) W_k\left(\frac{2xyN^2}{q^{4}Cz}\right)\\
&\times e\left(\frac{4xyN^2+q^{2+2r}\delta^2nm-2\delta(nx+my)Nq^{1+r}}{q^4Cz}-\frac{c_2Cz}{2q^{2+r}c_1\delta nm}\right)\frac{dz}{z^{\frac{3}{2}}}dxdy.
\end{align*}
Using repeated integration by parts in the $z$-integral, and using the bounds for $n$ and $m$ from Section~\ref{integral-00}, it follows that the integral is negligibly small unless
$$
|c_2|\ll \frac{c_1q^{4+\varepsilon}}{\delta q^r}\left(\frac{1+B^3}{B}\right).
$$
This gives the effective range for the $c_2$ sum in \eqref{trr}. This is good enough for our purpose for $B<q^{\varepsilon}$. However for larger $B$ we will obtain a better range below.\\

To get a partial separation of the variables $n$ and $m$, we define new variables $x'=x/m$, $y'=y/n$ and $z'=z/nm$. With this change of variables $\tilde{\mathcal I}_{r,c_1}(c_2;\delta n,\delta m)$ reduces to
\begin{align*}
\sqrt{nm}\iiint h\left(mx'\right)h\left(ny'\right)G(nmz')W_k\left(\frac{2x'y'N^2}{q^{4}Cz'}\right)e\left(\frac{4B\Delta(x',y')}{z'}-\frac{c_2Cz'}{2q^{2+r}c_1\delta}\right)\frac{dz'}{z'^{\frac{3}{2}}}dx'dy',
\end{align*}
where $\Delta(x',y')=\left(x'-\frac{\delta q^{1+r}}{2N}\right)\left(y'-\frac{\delta q^{1+r}}{2N}\right)$. Now suppose $B>q^{\varepsilon}$, so that by Lemma \ref{int-000} we have $|m|\asymp N/q^{1+r}$ (upto a factor of size $q^{\varepsilon}$). Then for a given $x'$, by repeated integration by parts in the $y'$ integral we obtain that $\tilde{\mathcal I}_{r,c_1}(c_2;\delta n,\delta m)$ is negligibly small unless $\left|x'-\frac{\delta q^{1+r}}{2N}\right|\ll \frac{\delta q^{1+r+\varepsilon}}{NB}$. So we get that upto a negligibly error the integral $\tilde{\mathcal I}_{r,c_1}(c_2;\delta n,\delta m)$ is given by
\begin{align*}
\sqrt{nm}\mathop{\iint}_{\left|x'-\frac{\delta q^{1+r}}{2N}\right|\ll \frac{\delta q^{1+r}}{NB}q^{\varepsilon}} h\left(mx'\right)h\left(ny'\right)\int G(nmz')W_k\left(\tfrac{2x'y'B}{z'}\right)e\left(\tfrac{4B\Delta(x',y')}{z'}-\tfrac{c_2Cz'}{2q^{2+r}c_1\delta}\right)\frac{dz'}{z'^{\frac{3}{2}}}dx'dy'.
\end{align*} 
Now we can get a refined effective range for $c_2$ by integrating by parts the inner integral. Notice that the restriction on $x'$ implies that $B\Delta(x',y')/z' \ll q^{\varepsilon}$. So we get that the inner integral is negligibly small unless 
$$
|c_2|\ll \frac{c_1q^{4+\varepsilon}}{\delta q^r}B.
$$
Observe that compared to the previous bound we have saved an extra $B$.   \\

We summarize our findings in the following lemma. Here $\tilde G(s)$ and $\tilde h(s)$ denote the Mellin transform of the smooth compactly supported functions $G$ and $h$ respectively. 
\begin{lemma}
\label{all-about-int}
The integral $\tilde{\mathcal I}_{r,c_1}(c_2;\delta n,\delta m)$ is negligibly small unless 
$$
|n|, |m| \ll \frac{Nq^{\varepsilon}}{\delta q^{1+r}}\left(1+B^{-1}\right),\;\;\;\text{and}\;\;\;|c_2|\ll \frac{c_1q^{4+\varepsilon}}{\delta q^r}(B+B^{-1}).
$$
Moreover upto a negligible error term we have
\begin{align*}
\tilde{\mathcal I}_{r,c_1}(c_2;\delta n,\delta m)= \frac{1}{(2\pi i)^3}\mathop{\iiint}_{(\sigma_1),(\sigma_2),(\sigma_3)}\tilde h(s_1)\tilde h(s_2)\tilde G(s_3)\mathcal F_{r,c_1}(c_2;\delta,B;s_1,s_2,s_3)\frac{ds_1ds_2ds_3}{m^{s_1+s_3-\frac{1}{2}}n^{s_2+s_3-\frac{1}{2}}},
\end{align*} 
where
\begin{align*}
\mathcal F_{r,c_1}(c_2;\delta,B;s_1,s_2,s_3)=\mathop{\iiint}_{\mathcal R(B)} x^{-s_1}y^{-s_2}z^{-s_3}W_k\left(\tfrac{2xyB}{z}\right)e\left(\tfrac{4B\Delta(x,y)}{z}-\tfrac{c_2Cz}{2q^{2+r}c_1\delta}\right)\frac{dxdydz}{z^{\frac{3}{2}}}.
\end{align*} 
For $B<q^{\varepsilon}$ we take the region $\mathcal R(B)=[Q^{-1},Q]^3\cap \{2^{-1}\leq xy/z \leq 4\}$, and for $B\geq q^{\varepsilon}$ the region is obtained by putting the further restriction that $\left|x-\frac{\delta q^{1+r}}{2N}\right|\ll \frac{\delta q^{1+r}}{NB}q^{\varepsilon}$.\\
\end{lemma}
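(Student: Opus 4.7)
The support assertions of the lemma are essentially established in the two paragraphs immediately preceding its statement; I would merely organise those observations cleanly, and the Mellin representation then follows by a routine substitution. For the $(n, m)$-support, apply Lemma~\ref{int-000} to the inner integral $I_r(2\delta n, 2\delta m; Cz/q^r)$ in the definition of $\tilde{\mathcal I}_{r,c_1}(c_2;\delta n,\delta m)$: since $G$ localises $z \in [1, 2]$, that lemma forces $|\delta n|, |\delta m| \asymp N/q^{1+r}$ up to $q^{\pm\varepsilon}$ when $B > q^\varepsilon$, while the cruder integration-by-parts bound at the start of Section~\ref{integral-00} handles $B \leq q^\varepsilon$, giving in either case $|n|, |m| \ll Nq^\varepsilon(1+B^{-1})/(\delta q^{1+r})$.

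\emph{The $c_2$-support.} Substitute the defining triple integral of $I_r$ into $\tilde{\mathcal I}_{r,c_1}$ and combine all phases. A direct integration by parts in the outer $z$-variable, using the $(n, m)$-support from the previous step, produces the first-pass estimate $|c_2| \ll c_1 q^{4+\varepsilon}(1+B^3)/(B\delta q^r)$, which already covers the regime $B < q^\varepsilon$. For $B \geq q^\varepsilon$ I would rescale $(x, y, z) \to (mx', ny', nmz')$; the combined phase then takes the form $4B\Delta(x', y')/z' - c_2 Cz'/(2q^{2+r}c_1\delta)$ with $\Delta(x', y') = (x' - \delta q^{1+r}/(2N))(y' - \delta q^{1+r}/(2N))$. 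Repeated integration by parts in $y'$ for each fixed $x'$ localises $|x' - \delta q^{1+r}/(2N)| \ll \delta q^{1+r+\varepsilon}/(NB)$, which in turn forces $|4B\Delta(x', y')/z'| \ll q^\varepsilon$ on the surviving range; a fresh integration by parts in $z'$ now only detects the $c_2$-part of the phase derivative, upgrading the bound to $|c_2| \ll c_1 q^{4+\varepsilon}B/(\delta q^r)$.

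\emph{Mellin representation and main difficulty.} Keeping the scaled variables above, substitute the Mellin inversions
$$h(mx') = \frac{1}{2\pi i}\int_{(\sigma_1)}\tilde h(s_1)(mx')^{-s_1}\,ds_1,\qquad h(ny') = \frac{1}{2\pi i}\int_{(\sigma_2)}\tilde h(s_2)(ny')^{-s_2}\,ds_2,$$
and similarly for $G(nmz')$ with $\tilde G(s_3)$. The factor $m^{-s_1}n^{-s_2}(nm)^{-s_3}$ pulls outside the $(x', y', z')$-integral and, combined with the $\sqrt{nm}$ Jacobian produced by the change of variables, yields exactly $1/(m^{s_1+s_3-1/2}n^{s_2+s_3-1/2})$. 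What remains is $\mathcal F_{r, c_1}(c_2;\delta, B; s_1, s_2, s_3)$ by its definition (after renaming $(x', y', z')$ as $(x, y, z)$), and the integration region $\mathcal R(B)$ is dictated by the supports of $h, h, G, W_k$, together with the refined $x'$-localisation from the previous step when $B \geq q^\varepsilon$. The only delicate point in the whole argument is the upgrade from $(1+B^3)/B$ to $B+B^{-1}$ in the $c_2$-range for $B \geq q^\varepsilon$: a single integration by parts in $z$ cannot exploit the smallness of $4B\Delta/z^2$ without first localising $(x', y')$ near the stationary line, and this two-stage stationary-phase-style analysis is the technical heart of the proof; everything else is bookkeeping and Mellin inversion.
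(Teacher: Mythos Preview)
Your proposal is correct and follows essentially the same route as the paper's argument in Section~\ref{integral}: the $(n,m)$-support via Section~\ref{integral-00}, the first-pass $c_2$-bound by integration by parts in $z$, then the rescaling $(x,y,z)\to(mx',ny',nmz')$ followed by the two-stage localisation (first in $y'$ to pin down $x'$ near $\delta q^{1+r}/(2N)$, then in $z'$ to upgrade the $c_2$-range), and finally Mellin inversion of $h,h,G$ with the $\sqrt{nm}$ Jacobian producing the stated powers of $n$ and $m$. You have correctly identified the refinement of the $c_2$-range for $B\geq q^{\varepsilon}$ as the one nontrivial step.
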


In the integral we will take the location of the contours to be $\sigma_1=\sigma_2=1$ and $\sigma_3=-\frac{1}{2}+\varepsilon$. For this choice we have
\begin{align}
\label{bound-for-F}
\mathcal F_{r,c_1}(c_2;\delta,B;s_1,s_2,s_3)\ll \min\{B^{-\frac{1}{2}},B^{k-1}\}\min\{1,B^{-1}\}q^{\varepsilon}\ll \min\{B^{-\frac{3}{2}},B^{k-1}\}q^{\varepsilon}.
\end{align}
This is obtained by trivially estimating the integrals over $x$, $y$ and $z$, taking into account the size of $W_k$ (see \eqref{bessel-bd0}) and the localization of $x$ for $B>q^{\varepsilon}$. Observe that the integral over $s_1$, $s_2$ and $s_3$ converges absolutely due to the rapid decay of the Mellin transforms as $|t|\rightarrow\infty$.

\section{The zero frequency}
\label{zero}

In this section we will show that the contribution coming from the zero frequency, i.e. $c_2=0$ in \eqref{trr}, is satisfactory for our purpose. 
\begin{lemma}
We have 
\begin{align*}
\mathop{\sum\sum}_{\substack{n,m=1\\(n,m)=1\\(nm,\delta q)=1\\ n,m\equiv 1 \bmod 4}}^{\infty}b_{\bar \chi}(n,0)b_{\chi}(m,0)\frac{\tilde{\mathcal I}_{r,c_1}(0;\delta un,\delta vm)}{nm}\ll \min\{B^{-\frac{3}{2}},B^{k-2}\}\frac{N}{q^{1+r}\delta\sqrt{uv}}q^{\varepsilon}.
\end{align*}
\end{lemma}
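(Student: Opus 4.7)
The plan is to exploit the vanishing of the Gauss sum $g(n,0)$ on non-squares to collapse the double sum to one indexed by pairs $(a,b)$ with $n=a^2$, $m=b^2$, and then to estimate the reduced sum trivially using the support and size information for $\tilde{\mathcal I}_{r,c_1}$ supplied by Lemma~\ref{all-about-int}.

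First, since $n,m\equiv 1\bmod 4$ and $(nm,\delta q)=1$, each $n$ is odd and coprime to $\delta q$, so
$$g(n,0)=\sideset{}{^\star}\sum_{\beta\bmod n}\left(\frac{\beta}{n}\right)$$
is the sum of the Jacobi character $(\tfrac{\cdot}{n})$ against the principal character; this vanishes unless $(\tfrac{\cdot}{n})$ is itself principal, i.e.\ unless $n$ is a perfect square, in which case it equals $\phi(n)$. Writing $n=a^2$, $m=b^2$ with $a,b$ odd, coprime to each other and to $\delta q$, we have $(\tfrac{\delta}{a^2})=(\tfrac{\delta}{b^2})=1$, so
$$\left|\frac{b_{\delta,\bar\chi}(a^2,0)\,b_{\delta,\chi}(b^2,0)}{a^2b^2}\right|=\frac{\phi(a^2)\phi(b^2)}{a^2b^2}\leq 1,$$
and the quantity to bound is at most $\sum_{a,b}|\tilde{\mathcal I}_{r,c_1}(0;\delta u a^2,\delta v b^2)|$.

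Next, by Lemma~\ref{all-about-int} the surviving pairs are confined (up to negligible error) to
$$a^2\ll \frac{Nq^{\varepsilon}}{\delta u q^{1+r}}(1+B^{-1}),\quad b^2\ll\frac{Nq^{\varepsilon}}{\delta v q^{1+r}}(1+B^{-1}),$$
producing at most $\frac{N}{\delta\sqrt{uv}q^{1+r}}(1+B^{-1})\,q^{\varepsilon}$ terms. For each such pair, I would plug in the Mellin representation from that lemma with $c_2=0$, shift contours to $\sigma_1=\sigma_2=1$ and $\sigma_3=-\tfrac12+\varepsilon$ so that the $a$- and $b$-powers contribute only $(ab)^\varepsilon$, and bound $\mathcal F_{r,c_1}(0;\delta,B;\cdot)$ via \eqref{bound-for-F} (using rapid decay of $\tilde h,\tilde G$ on vertical lines to secure absolute convergence) to obtain
$$\tilde{\mathcal I}_{r,c_1}(0;\delta u a^2,\delta v b^2)\ll \min\{B^{-3/2},B^{k-1}\}\,q^{\varepsilon}.$$

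Combining these two estimates produces the bound
$$\frac{N}{\delta\sqrt{uv}q^{1+r}}(1+B^{-1})\min\{B^{-3/2},B^{k-1}\}\,q^{\varepsilon},$$
and it remains to check that $(1+B^{-1})\min\{B^{-3/2},B^{k-1}\}=\min\{B^{-3/2},B^{k-2}\}$ by cases: for $B\geq 1$ the prefactor is $O(1)$ and the minimum on both sides equals $B^{-3/2}$; for $B<1$ the prefactor is $\asymp B^{-1}$, the left minimum equals $B^{k-1}$, and their product $B^{k-2}$ matches the right minimum. The substantive step is really just the Gauss-sum vanishing; the only subtle point is precisely this merging of the support-blow-up factor $(1+B^{-1})$ with the $\mathcal F$-bound, which is what sharpens $B^{k-1}$ to $B^{k-2}$ in the small-$B$ regime.
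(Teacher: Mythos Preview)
Your proof is correct and follows essentially the same route as the paper: reduce to squares via the vanishing of $g(n,0)$, bound the resulting sum over $(a,b)$ by $\sum_{a,b}|\tilde{\mathcal I}_{r,c_1}(0;\delta u a^2,\delta v b^2)|$, and then invoke Lemma~\ref{all-about-int} with $\sigma_1=\sigma_2=1$, $\sigma_3=-\tfrac12+\varepsilon$ together with \eqref{bound-for-F}. The paper leaves the final merging of the support factor $(1+B^{-1})$ with the $\mathcal F$-bound implicit, whereas you spell out the case analysis that yields $\min\{B^{-3/2},B^{k-2}\}$; this is a welcome clarification rather than a departure.
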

\begin{proof}
First observe that $g(n,0)$ vanishes unless $n$ is a square, in which case we have $g(n,0)=\phi(n)$. Thus it follows that the left hand side of the expression in the statement of the lemma, is bounded by
$$
\mathop{\sum\sum}_{n,m=1}^{\infty}|\tilde{\mathcal I}_{r,c_1}(0;\delta un^2,\delta vm^2)|.
$$
We can now conclude the lemma by appealing to Lemma \ref{all-about-int}, choosing $\sigma_1=\sigma_2=1$, $\sigma_3=-\frac{1}{2}+\varepsilon$, and applying the bound \eqref{bound-for-F}. 
\end{proof}
We note that
$$
S_r(0,-\overline{c_1}q^r\delta;q^2)=\begin{cases} \varepsilon_qq^{3/2} &\text{if $r=1$},\\
q(q-1) &\text{if $r>0$ and even},\\
0 &\text{otherwise}.
\end{cases}
$$
Thus it follows that the contribution of the zero frequency to $T_{r,C}^\star(\delta,c_1,u,v)$ in \eqref{trr} is bounded by
$$
\ll \frac{\min\{B^{-1},B^{k-\frac{3}{2}}\}q^{\varepsilon}}{q^{1+\frac{r}{2}}c_1\delta \sqrt{uv}}|S_r(0,-\overline{c_1}q^r\delta;q^2)|\ll \frac{q^{\varepsilon}}{c_1\delta \sqrt{uv}}.
$$
(Here we are using the trivial bound $g_{\delta,u,v}^\star(0)\ll \delta uv$.) Consequently the contribution of this part to $\mathcal T_{r,C}^\star$ in \eqref{tr'} is bounded by
\begin{align}
\label{lastone}
\ll \frac{N^2}{q^{5+\frac{1}{2}}}\sum_{\substack{\delta\leq Q\\q\nmid \delta}}
\sum_{\substack{c_1\leq Q\\c_1|(2\delta)^{\infty}}}(\delta,c_1)\mathop{\sum\sum}_{\substack{1\leq u,v\ll Q\\u,v|(2\delta)^{\infty}\\(u,v)=1}}\frac{q^{\varepsilon}}{c_1\delta \sqrt{uv}}\ll \frac{N^2}{q^{5+\frac{1}{2}}}\sum_{\delta\leq Q}\frac{q^{\varepsilon}}{\delta}\sum_{\substack{c_1\leq Q\\c_1|(2\delta)^{\infty}}}1\ll \frac{N^2}{q^{\frac{11}{2}}}q^{\varepsilon}.
\end{align}
This accounts for the second term in the bound given in \eqref{bdd2}, or the second term in the bound given in Theorem \ref{thm2}.


\section{Separation of variable and large sieve}
\label{sec-large}

Using the integral representation Lemma \ref{all-about-int}, we will now analyse the contribution of the positive frequencies, i.e. $c_2>0$, to $T_{r,C}^\star(\delta,c_1,u,v)$ in \eqref{trr}. To this end we need to get bounds for 
\begin{align}
\label{sumsum}
\frac{q^{\frac{r}{2}}\sqrt{B}}{Nc_1\delta uv}&\mathop{\iiint}_{(\mathbf{\sigma})}\frac{\tilde h(s_1)\tilde h(s_2)\tilde G(s_3)}{u^{\frac{1}{2}+s_2+s_3}v^{\frac{1}{2}+s_1+s_3}}\mathcal B(\mathbf{s})d\mathbf{s},
\end{align}
where 
\begin{align}
\label{bsum}
\mathcal B(\mathbf{s})=\sum_{c_2=1}^{\infty}& g_{\delta,u,v}^\star(c_2) S_r(c_2)\mathcal F_{r,c_1}(c_2;\delta,B;\mathbf{s})\mathop{\sum\sum}_{\substack{n,m=1\\(n,m)=1\\(nm,2\delta q)=1}}^{\infty}\frac{b_{\delta,\bar\chi}(n,c_2)}{n^{\frac{1}{2}+s_2+s_3}}\frac{b_{\delta,\chi}(m,c_2)}{m^{\frac{1}{2}+s_1+s_3}}.
\end{align}
Here we are using the shorthand notation $S_r(c_2)=S_r(\bar 2c_2,-\overline{c_1}q^r\delta;q^2)$, for which we will use the bound $|S_r(c_2)|\ll q(q,c_2)^{\frac{r}{2}}$. To ensure absolute convergence in the inner sums, a priori we put the restrictions that $\sigma_1+\sigma_3>1$ and $\sigma_2+\sigma_3>1$.  Also notice that we have extended the sums over $n$ and $m$ to all odd integers, for this manoeuvre we need to introduce an extra character modulo $4$, which we are going to ignore. Also we need to replace the Gauss sum $g(n,c_2)$ (also $g(m,c_2)$), which appears in the coefficients $b_{\delta,\psi}(n,c_2)$, by the multiplicative function 
$$
G_{c_2}(n)=\left(\frac{1-i}{2}+\left(\frac{-1}{n}\right)\frac{1+i}{2}\right)g(n,c_2).
$$
To separate the sums over $n$ and $m$ in \eqref{bsum}, we use Mobius inversion to get that $\mathcal B(\mathbf{s})$ is given by
\begin{align*}
\mathcal B(\mathbf{s})=\sum_{\substack{\theta=1\\(\theta,2\delta q)=1}}^{\infty}\mu(\theta)&\sum_{c_2=1}^\infty g_{\delta,u,v}^\star(c_2) S_r(c_2)\mathcal F_{r,c_1}(c_2;\delta,B;\mathbf{s})\mathop{\sum\sum}_{\substack{n,m=1\\(nm,2\delta q)=1}}^{\infty}\frac{b_{\delta,\bar\chi}(\theta n,c_2)}{(\theta n)^{\frac{1}{2}+s_2+s_3}}\frac{b_{\delta,\chi}(\theta m,c_2)}{(\theta m)^{\frac{1}{2}+s_1+s_3}}.\\
\end{align*}

Now we consider the $L$-series given by
$$
L_{\theta,\delta,\chi}(s;c_2)=\mathop{\sum}_{\substack{n=1\\(n,2\delta q)=1}}^{\infty}\frac{b_{\delta,\chi}(\theta n,c_2)}{(\theta n)^{\frac{1}{2}+s}}=\mathop{\sum}_{\substack{n=1\\(n,2\delta q)=1}}^{\infty}\frac{\chi(\theta n)G_{c_2}(\theta n)}{(\theta n)^{\frac{1}{2}+s}}\left(\frac{n\theta}{\delta}\right).
$$
The series converges absolutely for $\sigma$ sufficiently large, where it is also given by the Euler product
$$
L_{\theta,\delta,\chi}(s;c_2)=\prod_{p|\theta}\mathcal L_p'(s)\prod_{p\nmid 2\delta\theta q}\mathcal L_p(s),
$$
where 
$$
\mathcal L_p'(s)=\mathop{\sum}_{j=0}^{\infty}\frac{\chi(p^{j+1})G_{c_2}(p^{j+1})}{p^{(j+1)(\frac{1}{2}+s)}}\left(\frac{\delta}{p^{j+1}}\right),\;\;\;\text{and}\;\;\;
\mathcal L_p(s)=\mathop{\sum}_{j=0}^{\infty}\frac{\chi(p^{j})G_{c_2}(p^{j})}{p^{j(\frac{1}{2}+s)}}\left(\frac{\delta}{p^j}\right).
$$
In particular if $p\nmid 2\delta\theta qc_2$, we have
$$
\mathcal L_p(s)=\mathop{\sum}_{j=0}^{\infty}\frac{\chi(p^{j})G_{c_2}(p^{j})}{p^{j(\frac{1}{2}+s)}}=1+\frac{\chi(p)}{p^{s}}\left(\frac{\delta c_2}{p}\right).
$$
We write $c_2=c_{21}c_{22}^2$ and $\delta=\delta_1\delta_2^2$ with $c_{21}$ and $\delta_1$ square-free. Then it follows that we have a factorization
$$
L_{\theta,\delta,\chi}(s;c_2)=L\left(s,\chi\left(\frac{\delta_1c_{21}}{.}\right)\right)\tilde L_{\theta,\delta,\chi}(s;c_2),
$$
where the $L$-series $\tilde L_{\theta,\delta,\chi}(s;c_2)$ converges absolutely in the region $\sigma>\frac{1}{2}+\varepsilon$. Moreover in this domain we have
$$
\tilde L_{\theta,\delta,\chi}(s;c_2)\ll_{\varepsilon}\frac{(q\delta c_2)^{\varepsilon}}{\theta^{\frac{1}{2}+\varepsilon}}.
$$\\

Using the $L$-series we can write
\begin{align*}
\mathcal B(\mathbf{s})=\sum_{\substack{\theta=1\\(\theta,2\delta q)=1}}^{\infty}\mu(\theta)&\sum_{c_2=1}^\infty g_{\delta,u,v}^\star(c_2) S_r(c_2)\mathcal F_{r,c_1}(c_2;\delta,B;\mathbf{s})L_{\theta,\delta,\bar\chi}(s_2+s_3;c_2)L_{\theta,\delta,\chi}(s_1+s_3;c_2).
\end{align*}
We move the contours to $\sigma_1=\sigma_2=1$ and $\sigma_3=-\frac{1}{2}+\varepsilon$. Then applying Cauchy on the sum over $c_2$,  we are led to consider 
\begin{align}
\label{tc}
\frac{q^{\frac{r}{2}}\sqrt{B}}{Nc_1\delta (uv)^2}&\mathop{\iiint}_{(\mathbf{\sigma})}|\tilde h(s_1)\tilde h(s_2)\tilde G(s_3)|\mathcal B^\star_X(\mathbf{s})|d\mathbf{s}|,
\end{align}
where $\sigma$ is as above, and 
\begin{align*}
\mathcal B^\star_{X}(\mathbf{s})=\sum_{\theta=1}^{\infty}\frac{1}{\theta^{1+\varepsilon}}\sum_{c_{22}=1}^{\infty}\sum_{c_{21}\sim X}\left|g_{\delta,u,v}^\star(c_2) S_r(c_2)\mathcal F_{r,c_1}(c_2;\delta,B;\mathbf{s})\right|\left|L\left(s_1+s_3,\chi\left(\tfrac{\delta_1c_{21}}{.}\right)\right)\right|^2.
\end{align*}
Using approximate functional equation we can express the Dirichlet $L$-function as rapidly converging series with effective length given by the square-root of the analytic conductor. The analytic conductor is given by $[q^3,\frac{c_{21}\delta_1}{(c_{21},\delta_1)^2}]\left(3+|t_1+t_3|\right)$. Observe that if $q|c_{21}$ then the conductor drops, and in this case we have a better bound (in fact, with an extra saving of $\sqrt{q}$) compared to the generic case. In the generic case we note that (using the main result of \cite{HB})
\begin{align*}
\sum_{c_{22}\ll Q}(q\delta_1\delta_2,c_{22})\sum_{\substack{c_{21}\sim X/c_{22}^2\\q\nmid c_{21}}}(\delta_1,c_{21})\left|L\left(s_1+s_3,\chi\left(\tfrac{\delta_1 c_{21}}{.}\right)\right)\right|^2\ll (qT_{13})^{\varepsilon}T_{13}\left(X+\sqrt{q^3\delta_1X}\right),
\end{align*}
where $T_{13}=\left(3+|t_1+t_3|\right)$. \\

Substituting this bound in \eqref{tc}, using the bounds from Lemma \ref{bound-for-g}, $|S_r(c_2)|\ll q(q,c_2)^{\frac{r}{2}}$ (which is the Weil bound for Kloosterman sums) and \eqref{bound-for-F}, we obtain 
\begin{align}
\label{tc2}
\mathcal B_X^\star(\mathbf{s})\ll uv\delta_2q\min\{B^{-\frac{3}{2}},B^{k-1}\}T_{13}\left(X+\sqrt{q^3\delta_1 X}\right)(qT_{13})^{\varepsilon}.
\end{align}
According to Lemma \ref{all-about-int} in the worst case scenario $X=\frac{c_1q^{4+\varepsilon}}{\delta q^r}(B+B^{-1})$. Using \eqref{tc}, it follows that the contribution of the positive frequencies $c_2>0$ to \eqref{trr} is dominated by
$$
\frac{q^{1+\frac{r}{2}}}{Nc_1\delta_1\delta_2 uv}\min\{B^{-1},B^{k-\frac{1}{2}}\}\max\{B,B^{-1}\}\left(\frac{c_1q^{4+\varepsilon}}{\delta q^r}+\sqrt{\frac{c_1q^{7+\varepsilon}}{\delta_2^2q^r}}\right)q^{\varepsilon}.
$$
Now we observe that $\min\{B^{-1},B^{k-\frac{1}{2}}\}\max\{B,B^{-1}\}\ll 1$ (as $k\geq 2$). Bounding the contribution of the negative frequencies $c_2<0$ in exactly the same manner we obtain
\begin{align}
\label{trr-final-bd}
T_{r,C}^\star(\delta,c_1,u,v)\ll \frac{q^{\varepsilon}}{c_1\delta \sqrt{uv}}+\frac{q^{1+\frac{r}{2}}}{Nc_1\delta_1\delta_2 uv}\left(\frac{c_1q^{4+\varepsilon}}{\delta q^r}+\sqrt{\frac{c_1q^{7+\varepsilon}}{\delta_2^2q^r}}\right)q^{\varepsilon},
\end{align}
where the first term on the right hand side is the diagonal contribution which we obtained in Section~\ref{zero}. Substituting in \eqref{tr'} it follows that
$$
\mathcal T_{r,C}^\star\ll \frac{N^2}{q^{\frac{11}{2}}}q^{\varepsilon}+Nq^{\varepsilon}\sum_{\delta\leq Q}\frac{1}{\delta_1\delta_2}
\sum_{\substack{c_1\leq Q\\c_1|(2\delta)^{\infty}}}(\delta_2^2,c_1)\max\left\{\frac{1}{\delta_1\delta_2^2},\frac{1}{\delta_2\sqrt{qc_1}}\right\}\ll N\left(1+\frac{N}{q^{\frac{11}{2}}}\right)(Nq)^{\varepsilon}.
$$
(Recall that $\delta=\delta_1\delta_2^2$ with $\delta_1$ square-free.) This concludes the proof of the bound \eqref{bdd2}. As a consequence Theorem \ref{thm2} follows.




\begin{thebibliography}{99}

\bibitem{Bl} V. Blomer, 
On the central value of symmetric square $L$-functions,
Math. Z. \textbf{260} (2008), 755--777.

\bibitem{Bl2} V. Blomer,
Subconvexity for twisted $L$-functions on $GL(3)$,
to appear in Amer. J. Math.

\bibitem{GR} I. S. Gradhsteyn; I. M. Rizhik,
Table of Integrals, Series and Products, 6th Edition, Academic Press, (2000). 

\bibitem{HB} D. R. Heath-Brown, 
A mean value estimate for real character sums, 
Acta Arith. \textbf{72} (1995), 235--275.

\bibitem{IK} H. Iwaniec; E. Kowalski, 
Analytic Number Theory, Amer. Math. Soc. Coll. Publ. \textbf{53}, American Mathematical
Society, Providence, RI, (2004).

\bibitem{IM} H. Iwaniec; P. Michel,
The second moment of the symmetric square $L$-functions,
Ann. Acad. Sci. Fenn. Math.
\textbf{26} (2001), 465--482.

\bibitem{La} E. M. Lapid, 
On the nonnegativity of Rankin-Selberg $L$-functions at the center of symmetry, 
Int. Math. Res. Not. \textbf{2003} (2003), 65--75.

\bibitem{Li} W-C. W. Li, 
$L$-series of Rankin type and their functional equations,
Math. Ann. \textbf{244} (1979), 135--166.  

\bibitem{L} X. Li, 
Bounds for $GL(3)\times GL(2)$ $L$-functions and $GL(3)$ $L$-functions,
Annals of Math. \textbf{173} (2011), 301--336.

\bibitem{Mu} R. Munshi,
Bounds for twisted symmetric square $L$-functions,
preprint.
  



\end{thebibliography}
\end{document}